\theoremstyle{definition}
\newtheorem{thm}{Theorem}[section]
\newtheorem{cor}[thm]{Corollary}
\newtheorem{definition}[thm]{Definition}
\newtheorem{remark}[thm]{Remark}
\newtheorem{ass}[thm]{Assumption}
\newtheorem{problem}{Problem}[section]
\newtheorem{lemma}{Lemma}[section]
\newtheorem{theorem}{Theorem}[section]
\numberwithin{equation}{section}
\newcommand{\subjclass}[1]{\bigskip\noindent\emph{2010 Mathematics Subject Classification:}\enspace#1}
\newcommand{\keywords}[1]{\noindent\emph{Keywords:}\enspace#1}
\newcommand{\R}{\mathbb{R}}
\newcommand\restr[2]{{
  \left.\kern-\nulldelimiterspace
  #1
  \vphantom{\big|}
  \right|_{#2}  }}
\newcommand{\Pp}{\mathbb{P}}
\newcommand{\lz}{\left( }
\newcommand{\dz}{\right)}
\newcommand{\Vel}{\textbf{v}}
\definecolor{airforceblue}{rgb}{0.36, 0.54, 0.66}
\definecolor{ballblue}{rgb}{0.13, 0.67, 0.8}
\title{ Linear Parabolic Problems in Random Moving Domains}
\author{Ana Djurdjevac\\
Institut für Mathematik\\
Techinsche  Universität Berlin\\
adjurdjevac@math.tu-berlin.de\\
}
\date{}
\begin{document}

\maketitle
\begin{abstract}
We consider linear parabolic equations on a random non-cylindrical domain. Utilizing the domain mapping method, we write the problem as a partial differential equation with random coefficients on a cylindrical deterministic domain. Exploiting the deterministic results concerning equations on  non-cylindrical domains, we state the necessary assumptions about the velocity filed and in addition, about the flow transformation that this field generates. In this paper we consider both cases, the uniformly bounded with respect to the sample and log-normal type transformation. In addition, we give an explicit example of a log-normal type transformation and prove that it does not satisfy the uniformly bounded condition. We define a general framework for considering linear parabolic problems on random non-cylindrical domains. As the first example, we consider the heat equation on a random tube domain and prove its well-posedness. Moreover, as the other example we consider the parabolic Stokes equation which illustrates the case when it is not enough just to study the plain-back transformation of the function, but instead to consider for example the Piola type transformation, in order to keep the divergence free property. 
\end{abstract}

\keywords{random domain, non-cylindrical domain, uncertainty quantification}

\subjclass[2010]{ 35R60, 35K05}

\section{Introduction}
Partial differential equations (PDEs) appear in the mathematical modeling of a great variety
of processes. Often in these models  uncertainty appears for various reasons, such as  incomplete knowledge about the given data. The given data can be a source term, an initial state, parameters, a domain etc. In this work we study the situation where the uncertainty of the model comes from the geometrical aspect. For example, the computational domain is often given by scanning, or some other digital imaging technique with limited resolution which leads to the variance between the shape of the real body and the model (for a mathematical model of this problem see \cite{BC}). A well established and efficient way to deal with this problem is to adopt the probabilistic approach, i.e. construct models of geometrical uncertainty and describe the phenomena by PDEs on a random domain.  More precisely, one considers the fixed initial deterministic domain $D_0 \subset \R^d$ and its evolution in a time interval $[0,\tau]$ by a random velocity $V$, defined on a probability space $(\Omega, \mathcal{A}, \mathbb{P})$. In this way one obtains a non-cylindrical, i.e. time-dependent, random domain 
$$Q(\omega) : = \bigcup_{t \in (0,\tau)} D_t(\omega)\times \{ t \} ,$$
also known as a  random tube domain, where $\omega$ is a sample from $\Omega$. 

Random domains appear in many applications, such as biology, surface imaging, manufacturing of nano-devices etc. One particular application example occurs in the wind engineering as presented in \cite{CF}. More precisely, the authors study how small uncertain geometric changes in the Sunshine Skyway Bridge deck affect its aerodynamic behavior. The geometric uncertainty of the bridge is due to its specific construction and wind effect.   This model results in a parabolic PDE on a random domain. Another class of examples comes from the modeling in Cardiovascular Biomechanics. In particular there are papers that consider uncertainty in these models, that are mainly presented by flow equations \cite{SMapp1,SMapp2} and of specific interest is  the uncertainty that comes from the geometry.  Another application was suggested in \cite{TX2}, where authors suggest  to treat rough surfaces, i.e. highly irregular surfaces such as glaciers, as random domains.  In the light of their practical relevance, the analysis and numerical analysis of random domains have been considered by many authors, see \cite{CC,CCNT,HPS,HSSS,XT}. Notice that of the application point of view, in particular  biological applications, it is of big interest to consider these equations on curved domains. Elliptic PDEs on random surface domains have been considered in \cite{CDJE}.

In Figure \ref{figure:tube}, we visualize the difference between the deterministic cylindrical domain, the random cylindrical domain and the random non-cylindrical domain. The first plot presents a standard cylindrical domain. The second one is a realization of a random tube given by 
\[
\mathbb{S}^1 \ni (x_0, y_0) \mapsto (x(\omega),y(\omega)) := (2x_0 Y_1(\omega), 3 y_0 Y_2(\omega)) \in D(\omega)
\]
where $Y_1, Y_2 \sim \mathcal{U}(0,1)$ are independent RVs. The last two plots are two realizations of a random non-cylindrical tube defined by
\begin{eqnarray*}
&\mathbb{S}^1 \ni (x_0, y_0) \mapsto (x(\omega,t),y(\omega,t)) := \\
&\!\!\!\!\!\!(Y_1(\omega) (\sin(Y_2(\omega))\! + \!1.5 ) x_0\! +\! 0.3 \cos (Y_3(\omega)t), Y_4(\omega) (\sin(Y_5(\omega)) \!+ \!1.5 ) y_0 \!+ \!0.3 \sin (Y_6(\omega)t))\!\! \in  \!\!D_t(\omega)
\end{eqnarray*}
where $Y_1,\dots, Y_6 \sim \mathcal{U}(0,1)$ are independent RVs.

\begin{figure}[htb]
  \centering
  \includegraphics[width=0.6\textwidth]{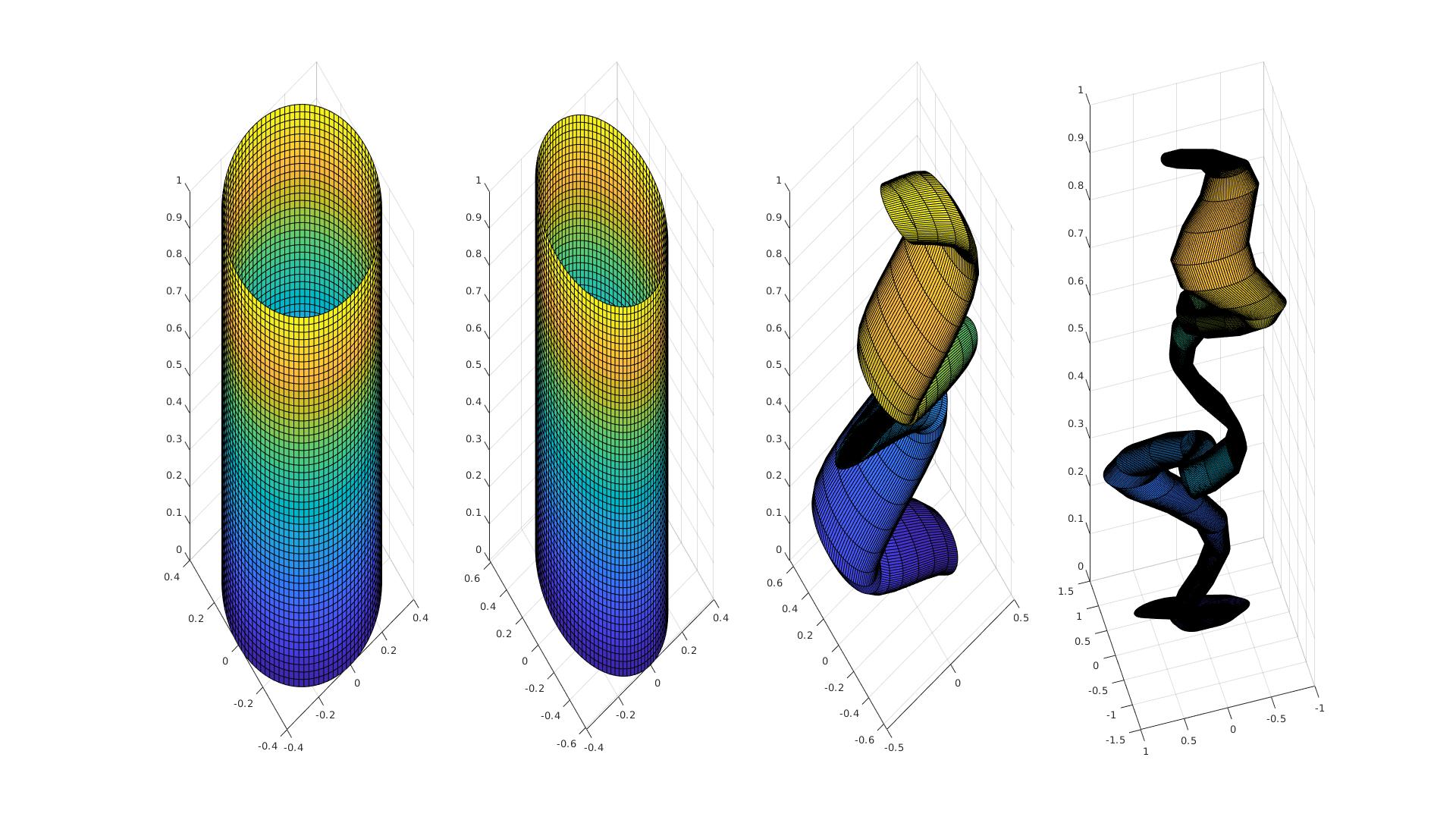}
  \caption[Cylindrical domain, realizations of a random cylindrical domain and  of a random non-cylindrical domain]{Cylindrical domain, realization of a random cylindrical domain and realizations of random non-cylindrical domains, respectively.} \label{figure:tube}
\end{figure}

The approach that we consider in this paper is known as the domain mapping method \cite{HPS,XT}. The other well-known approaches are the perturbation method  (cf. \cite{HSS}), eXtended stochastic FEM  \cite{NCSM} and fictitious domain approach \cite{CK} . The domain mapping method   requires knowledge about the transformation field $T$ on the closer of the fixed initial  domain $D_0$:
\[
T(\omega) : \overline{ D_0} \mapsto \R^d.
\]
The main idea of this method is to reformulate the PDE on the random domain into the PDE with random coefficients on a fixed reference domain. This reformulation allows us to apply numerous available analysis and numerical methods for solving random PDEs and to avoid the construction of a new mesh for every realization of a random  domain.

We assume that we are given a deterministic domain $D_0$ that evolves with a given random velocity field $V$, and as a result we build a random tube. To a random velocity, we will associate its flow $T_V$ that will map a domain $D_0$ into a random domain $D_t(\omega)$ at a time $t$.

Notice that, in the previous work on random domains, mainly elliptic PDEs have been considered. Very few papers consider parabolic PDEs on random domains, such as \cite{CC,CF, TX2}. In addition, to the best of our knowledge, there are no results on random domains that change in time, which is exactly the goal of this work. This paper is based on the draft presented in \cite{Dj}, that resulted from the thesis \cite{DjPhD}. We will consider the well-posedness of linear parabolic PDEs posed on random moving domains and necessary conditions about the initial data, the random velocity field and the induced transformation, that will ensure the  well-posedness of the equation. 

In the deterministic case, PDEs on the so-called non-cylindrical domains, i.e. domains changing in time, are a well-established topic regarded  analysis and numerical analysis  (see \cite{BG, BHT19,CDPZ,CRB,DPZ,KRS,LMZ}),  with numerous applications. Various physical examples concerning  phenomena on time dependent spatial domains are presented in the survey article \cite{KK}. 
Some of the examples are: fluid flows with a free or moving solid-fluid interface, the Friedmann model in astrophysics that describes the scaling of key densities in the Universe with its time-dependent expansion factor,  and many examples of biological processes that involve time-dependent domains, such as the formation of patterns and shapes in biology. 
In \cite{KRS,LMZ}  authors focus on appropriate formulation of the heat equation on a random domain and on proving the existence and uniqueness of strong, weak and ultra weak solutions, as well as providing  energy estimates.  These papers  use coordinate transformation to reformulate the PDE into a cylindrical domain and   Lions' general  theory for proving the well-posedness. Moreover, in \cite{DPZ,DzTube,DZEul} similar results were obtained but with a greater focus on the connection of the non-cylindrical domain and the velocity field. Since we are particularly interested in how the velocity field induces a non-cylindrical domain, we will  follow this approach. In this paper we exploit these deterministic results, to derive necessary regularity results in the random setting about the given velocity field. In addition to it, we consider assumptions  about the measurability  and $L^p(\Omega)$ bounds. The random velocity field could be presented in many ways, for example in numerics it is common to assume that the velocity field is given by its truncated Karhunen-Loeve expansion.

In this paper, we  consider both cases, the uniformly bounded in $\omega$ and log-normal type, of the flow transformation. To the best of our knowledge up to now all the analysis in the random domain setting, has been regarding the uniformly bounded domain transformation. We construct an explicit example of the transformation that does not satisfy this condition. The type transformation is of the  exponential of the regularized Brownian motion. We prove that this type of transformation still satisfies necessary regularity assumptions, but not uniformly bounded assumption. In \cite{LognormalAppl} the authors present the big variety of applications of log-normal field in ecology, in particular in growth models. One could then consider PDEs whose computational domain is given by these growth models.
Exploiting domain mapping method we prove well-posedness in both types of previously mentioned random transformations of the domain. The first step is to transform the equation to the deterministic cylindrical domain, using the pull back transformation that is a transformation, of the so-called plain pull-back of the considered function $u$ on the random domain $Q_T(\omega)$, i.e. 
$$\hat u(\omega, x, t) := \mathcal{F}u(\omega, t, T_t(x)) \quad \omega \in \Omega, t \in [0,\tau], x \in D_0,$$
where $\mathcal{F}$ is a suitable transformation.
As a standard example of a linear parabolic PDE we consider the heat equation on the tube domain, and in this case $\mathcal{F}$ is identity.
 In the uniformly bounded setting the well-posedness, is a direct consequence of the general theory of parabolic PDEs. In the log-normal type case, one needs in addition to prove measurability of the solution, and its $L^p(\Omega)$ bounds. To achieve that, we exploit the proof of the general deterministic result in order to exactly determine the constants that appear in a path-wise setting. Under suitable assumptions, we can control these constants and prove well-posedness of the pulled-back equation.

As a next example, we consider the parabolic Stokes equation. Parabolic Stokes and Navier-Stokes equations on a non-cylindrical domain have been investigated by many authors. In particular they studied the well-posedness and regularity results.  This equation on a non-cylindrical deterministic domain has been considered by many authors \cite{WI, Saal}.  This example illustrates that it is not always enough to choose a plain pull-back transformation, as in the case of the heat equation, but instead one has to cleverly choose the transformation $\mathcal{F}$ in order to preserve certain properties (such as divergence free) or to obtain more simple form of the pulled-back equation. In this case, we choose a Piola type transformation and derive the pulled-back equation. The well-posedness of this equation is briefly commented, and its proof is the subject of future work.  

This paper is organized as follows. In  Section 2, we introduce the general function spaces that will be exploited in the formulation of PDEs on random moving domains. Namely, on the cylindrical domain we consider the standard Sobolev-Bochner type spaces, and for the path-wise consideration on the random tube we utilize the general setting for the PDEs on moving domains defined in \cite{EAS}. In Section 3 we define precisely  what  we mean by a random tube, in particular we specify the needed regularity assumptions based on the work presented in \cite{DZEul,DzTube}.  In addition, we give an example of a transformation field that does not satisfy the uniformly bounded assumption. Section 4 is devoted to   the heat equation on a random tube, as a typical example of a linear parabolic equation. We prove its well-posedness, in the case of uniformly-bounded and long-normal type transformation, exploiting the  domain decomposition method. In Section 5 we consider the parabolic Stokes equation, where more complicated Piola type transformation is applied. We conclude in Section 6, by  giving a brief conclusion of the paper and suggest further possible directions of a research.

\section{General Setting}

Let $(\Omega, \mathcal{F}, \mathbb{P})$ be a complete probability space with a sample space $\Omega$, a $\sigma$-algebra of
events $\mathcal{F}$  and a probability $ \mathbb{P}$. In addition, we assume that $L^2(\Omega)$ is a
separable space. For this assumption it suffices to assume that $(\Omega, \mathcal{F}, \Pp)$ is a separable measure space  \cite[Theorem 4.13]{sepBrezis}, i.e. that $\mathcal{F}$ is generated by a countable collection of subsets. Under this assumption, for any separable Hilbert space $H$, it holds (see \cite[Theorem II.10]{RS})
\begin{equation}\label{tensor}
L^2(\Omega) \otimes H \cong L^2(\Omega, H).
\end{equation}

We only consider a fixed finite
time interval $[0,\tau]$, where $\tau \in (0,\infty).$ Furthermore, we  denote by $D([0,\tau],\mathcal{H})$ the space of all $C^\infty$-smooth $\mathcal{H}$-valued  functions with compact support in $[0,\tau].$ In the special case, when $\mathcal{H}$ 
equals $\R$, we just write  
 $D([0,\tau])$.
We will reuse the same constants $C$ in calculations multiple times if their exact value is not important. When there is no confusion,  integrals will be usually written without measure. Let $D_0 \subset \R^d$ be an open, bounded domain with a Lipschitz boundary. The curly notation for spaces, such as $\mathcal{V}, \mathcal{H}$ and hat for functions $\hat{u}$, will be used for the deterministic cylindrical domain. When we utilize  some deterministic result  in a path-wise sense, if there is no confusion, we omit writing the dependence on the sample $\omega$ to simplify the notation.

\subsection{Function spaces on the cylindrical domain} \label{sec:Bochner_sp}

The Bochner spaces are straightforward generalization of the Lebesgue spaces to  Banach space valued functions and are natural spaces to use for the pulled-back formulation of a PDE on a cylindrical (deterministic) domain.
One can define the integral of an $E$-valued random variable $X: \Omega \to E $, where $(E, \mathcal{B}(E))$ is a separable Banach space.
The precise definition of Bochner spaces and its properties can be found for example in \cite{PZ}, and here we follow the summary of results from \cite{EAS}.
Let $$\mathcal{V} \xhookrightarrow[i]{} \mathcal{H} \cong \mathcal{H}^*\xhookrightarrow[i']{} \mathcal{V}^*$$ be a Gelfand triple. Every vector-valued distribution $u \in   L^2(0,\tau; \mathcal{H})$ defines a vector-valued distribution $\mathrm{S}_u: D((0,\tau)) \to\mathcal{H}$ through the $\mathcal{H}$-valued integral 
\[
\varphi \mapsto \int_0^\tau y(t) \varphi(t) dt. 
\]
We will identify $\mathrm{S}_u$ and $u$. 
We say  that $u \in L^2(0,\tau; \mathcal{H})$ has a {\textit{weak derivative}} $u' \in L^2(0,\tau; \mathcal{H}^*) $
if there exists $w \in L^2(0,\tau; \mathcal{H}^*)$ such that
\begin{equation}\label{weakDer}
\textrm{S}'_u(\xi) = \int_0^\tau \xi '(t) (u(t),v)_{\mathcal{L}} = - \int_0^\tau \xi(t) \left< w(t), v \right>_{\mathcal{H}^*,\mathcal{H}}, \quad \forall \xi \in D(0,\tau),\, \quad \forall v \in \mathcal{H}
\end{equation}
and we write $w=u'$.
Recall that the standard Sobolev-Bochner space is defined as
\begin{equation}\label{def:stdSBsp}
\mathcal{W}(\mathcal{V}, \mathcal{V}^*) := \{ u \in L^2(0,\tau; \mathcal{V}) \mid u' \in L^2(0,\tau;\mathcal{V}^*) \}.
\end{equation}
The space $\mathcal{W}(\mathcal{H}, \mathcal{H}^*)$ is a Hilbert space with the inner product defined via:
\begin{equation*} 
\lz u,v\dz_{\mathcal{W}(\mathcal{V},\mathcal{V}^*)} := \int_0^\tau  \lz u(t), v(t)\dz_{\mathcal{V}} + \int_0^\tau (u'(t), v'(t))_{\mathcal{V^*}}.
\end{equation*}
\begin{theorem}\label{thm:prop_std_SB_sp} The following  properties of the space $\mathcal{W}(\mathcal{H}, \mathcal{H}^*)$ hold
\begin{enumerate}
\item[i)] The embedding $\mathcal{W}(\mathcal{V}, \mathcal{V}^*) \subset C([0,\tau],\mathcal{H})$ is continuous.

\item[ii)] 
The embedding $ \mathcal{W}(\mathcal{V}, \mathcal{V}^*) \subset D([0,\tau],\mathcal{V})$  is dense.

\item[iii)] Let $u,v \in \mathcal{W}(\mathcal{V}, \mathcal{V}^*)$, then the mapping
$t \mapsto (u(t),v(t))_{\mathcal{H}}$
is absolutely continuous on $[0,\tau]$ and 
\[
\frac{d}{dt} (u(t),v(t))_{\mathcal{H}} = \left<u'(t),v(t) \right>_{\mathcal{V}^*,\mathcal{V}} + \left<u(t),v'(t) \right>_{\mathcal{V},\mathcal{V}^*}
\]
holds for almost every $t \in [0,\tau]$. The last expression implies the integration by parts formula
\[
(u(\tau),v(\tau))_{\mathcal{H}} - (u(0),v(0))_{\mathcal{H}} = \int_0^\tau \left<u'(t),v(t) \right>_{\mathcal{V	}^*,\mathcal{V}} + \int_0^\tau  \left<u(t),v'(t) \right>_{\mathcal{V},\mathcal{V}^*}.
\]
\end{enumerate}
\end{theorem}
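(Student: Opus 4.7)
The plan is to establish the three properties in the order (iii) first, then use it to deduce (i), with (ii) proved independently via a regularization argument which is also the technical engine behind (iii). Concretely, I would carry out (ii), then (iii), then (i), because the product rule in (iii) and the continuous embedding in (i) both reduce to identities that are trivial for smooth functions and then extended by density.

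For (ii), which I read as the density of $D([0,\tau],\mathcal{V})$ in $\mathcal{W}(\mathcal{V},\mathcal{V}^*)$, I would use the standard extension-and-mollification technique. Given $u\in\mathcal{W}(\mathcal{V},\mathcal{V}^*)$, first extend it to $\tilde u$ on a slightly larger interval $[-\delta,\tau+\delta]$ by reflection across the endpoints; a direct check using the defining identity \eqref{weakDer} shows that the weak derivative of $\tilde u$ (in $\mathcal{V}^*$) coincides with the corresponding reflection of $u'$. Next, convolve $\tilde u$ with a standard mollifier $\rho_\varepsilon\in C^\infty_c(\R)$ to produce $u_\varepsilon:=\tilde u*\rho_\varepsilon$, which is $C^\infty$ with values in $\mathcal{V}$, and verify that $u_\varepsilon'=\tilde u'*\rho_\varepsilon$ in $\mathcal{V}^*$. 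Standard properties of mollification in Bochner spaces give $u_\varepsilon\to u$ in $L^2(0,\tau;\mathcal{V})$ and $u_\varepsilon'\to u'$ in $L^2(0,\tau;\mathcal{V}^*)$. Multiplying by a smooth cut-off equal to one on $[0,\tau]$ yields approximants in $D([0,\tau],\mathcal{V})$.

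For (iii), for any two smooth test functions $u,v\in D([0,\tau],\mathcal{V})$ the product rule $\tfrac{d}{dt}(u(t),v(t))_{\mathcal{H}}=\left<u'(t),v(t)\right>_{\mathcal{V}^*,\mathcal{V}}+\left<u(t),v'(t)\right>_{\mathcal{V},\mathcal{V}^*}$ follows from the classical chain rule, and integrating in $t$ gives the integration-by-parts identity. For general $u,v\in\mathcal{W}(\mathcal{V},\mathcal{V}^*)$, I would take approximants $u_n,v_n$ from (ii) and pass to the limit in the identity on each subinterval $[s,t]$. The left-hand side converges because $u_n(s)\to u(s)$ and $u_n(t)\to u(t)$ in $\mathcal{H}$ for a.e.\ $s,t$ (from $L^2$ convergence after passing to a subsequence), and the right-hand side converges by the Cauchy–Schwarz inequality in the duality pairing. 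Absolute continuity of $t\mapsto(u(t),v(t))_{\mathcal{H}}$ then follows since the right-hand side is the integral of an $L^1$ function.

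For (i), setting $u=v$ in (iii) gives $\|u(t)\|_{\mathcal{H}}^2=\|u(s)\|_{\mathcal{H}}^2+2\int_s^t\left<u'(r),u(r)\right>_{\mathcal{V}^*,\mathcal{V}}dr$, so $t\mapsto\|u(t)\|_{\mathcal{H}}^2$ is continuous; combined with weak continuity into $\mathcal{H}$ (inherited from $L^2(0,\tau;\mathcal{V})$ on a suitable representative) this yields the continuous representative $u\in C([0,\tau],\mathcal{H})$. For the norm bound, fix $s\in[0,\tau]$ and estimate
\[
\|u(t)\|_{\mathcal{H}}^2\le\|u(s)\|_{\mathcal{H}}^2+\|u'\|_{L^2(0,\tau;\mathcal{V}^*)}^2+\|u\|_{L^2(0,\tau;\mathcal{V})}^2;
\]
averaging this inequality over $s\in[0,\tau]$ and using the continuous embedding $\mathcal{V}\hookrightarrow\mathcal{H}$ to bound $\int_0^\tau\|u(s)\|_{\mathcal{H}}^2ds$ by $\|u\|_{\mathcal{W}(\mathcal{V},\mathcal{V}^*)}^2$ yields $\sup_t\|u(t)\|_{\mathcal{H}}\le C\|u\|_{\mathcal{W}(\mathcal{V},\mathcal{V}^*)}$. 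The main technical obstacle is (ii): verifying that reflection-extension preserves the weak derivative and that convolution commutes with it in the dual space $\mathcal{V}^*$, which requires care because the derivative lives in a larger space than the function itself; separability of $\mathcal{V}$ (implicit in the Gelfand triple setup) is what makes the Bochner mollification theory behave as in the scalar case.
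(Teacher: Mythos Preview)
Your proposal is correct and follows the standard textbook route (extension--mollification for density, then passage to the limit in the product rule, then the $\|u(t)\|_{\mathcal H}^2$ identity for the continuous embedding). The paper, however, does not give an argument at all: its entire proof is a pair of citations, referring to \cite[Theorem 2.1]{LM} for the density statement (ii) and to \cite{Show} for (i) and (iii). What you have written is essentially a sketch of the proofs contained in those references, so in substance you are aligned with the paper; you simply supply the details the paper delegates. Your reading of (ii) as ``$D([0,\tau],\mathcal V)$ is dense in $\mathcal W(\mathcal V,\mathcal V^*)$'' is also the intended one (the printed inclusion in the statement is written in the wrong direction).
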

\begin{proof}
For the density result see \cite[Theorem 2.1]{LM} and for other statements see \cite{Show}.
\end{proof}

In the definition of a weak material derivative, we  utilize that the weak derivative can be characterized in terms of vector-valued test-functions. We state this standard result for completeness. 
\begin{theorem}
The weak derivative condition (\ref{weakDer}) is equivalent to 
\begin{equation}
\int_0^\tau (u(t), \psi'(t))_{\mathcal{H}} = - \int_0^\tau \left< u'(t), \psi(t) \right>_{\mathcal{V}^*,\mathcal{V}} \quad \forall \psi \in D((0,\tau),\mathcal{V}). \label{ch_wd}
\end{equation}
\end{theorem}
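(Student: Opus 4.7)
The plan is to prove the two directions separately, with the trivial direction by specialization and the reverse direction by a density argument that uses tensor-product test functions.

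For the direction $(\ref{ch_wd}) \Rightarrow (\ref{weakDer})$, I would simply specialize the vector-valued test function. Given any $\xi \in D(0,\tau)$ and any $v \in \mathcal{V}$, define $\psi(t) := \xi(t) v$. Then $\psi \in D((0,\tau),\mathcal{V})$ with $\psi'(t) = \xi'(t) v$. Substituting into $(\ref{ch_wd})$ and pulling the constant vector $v$ out of the $\mathcal{H}$-inner product and duality pairing yields exactly $(\ref{weakDer})$.

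For the reverse direction $(\ref{weakDer}) \Rightarrow (\ref{ch_wd})$, I would proceed in two steps. First, by linearity of $(\ref{weakDer})$ in both arguments, the identity extends to finite sums
\[
\psi_n(t) = \sum_{i=1}^n \xi_i(t) v_i, \qquad \xi_i \in D(0,\tau),\ v_i \in \mathcal{V},
\]
that is, to elements of the algebraic tensor product $D(0,\tau) \otimes \mathcal{V}$. Second, I would invoke the standard density result that $D(0,\tau) \otimes \mathcal{V}$ is sequentially dense in $D((0,\tau),\mathcal{V})$ in the canonical inductive-limit topology; given any $\psi \in D((0,\tau),\mathcal{V})$, one can produce an approximating sequence $\psi_n \to \psi$ such that both $\psi_n \to \psi$ in $L^2(0,\tau;\mathcal{V})$ and $\psi_n' \to \psi'$ in $L^2(0,\tau;\mathcal{H})$, with supports contained in a fixed compact subset of $(0,\tau)$. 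The approximation can be constructed using partition of unity on a finite-dimensional projection of the range of $\psi$, or via convolution with mollifiers in $t$ applied to a suitable Schauder-type expansion.

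Once the approximation is in place, both sides of $(\ref{ch_wd})$ pass to the limit by Cauchy--Schwarz: the left side satisfies
\[
\Bigl| \int_0^\tau (u, \psi_n' - \psi')_{\mathcal{H}} \Bigr| \le \|u\|_{L^2(0,\tau;\mathcal{H})} \|\psi_n' - \psi'\|_{L^2(0,\tau;\mathcal{H})},
\]
while the right side is controlled by $\|u'\|_{L^2(0,\tau;\mathcal{V}^*)} \|\psi_n - \psi\|_{L^2(0,\tau;\mathcal{V})}$. Both bounds go to zero, so the identity for $\psi_n$ carries over to $\psi$, giving $(\ref{ch_wd})$.

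The main obstacle is the density statement $D(0,\tau) \otimes \mathcal{V} \subset D((0,\tau),\mathcal{V})$ in a topology strong enough to make both integrals continuous. This is a classical but nontrivial fact (related to the identification in $(\ref{tensor})$ in the Hilbert setting, but here it is needed at the level of smooth compactly supported functions). In a fully written proof I would either cite a standard reference such as Schwartz's theory of vector-valued distributions, or give a short mollification argument: given $\psi \in D((0,\tau),\mathcal{V})$, pick an increasing sequence of finite-dimensional subspaces $\mathcal{V}_n \subset \mathcal{V}$ with dense union, project $\psi$ onto $\mathcal{V}_n$ using an orthonormal basis, then regularize in $t$ by mollification to regain smoothness while keeping support in a fixed compact set. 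The projections yield finite sums of the form $\sum_i \xi_i(t) v_i$ by expanding in the basis of $\mathcal{V}_n$, and the combined error is controlled in both $L^2(0,\tau;\mathcal{V})$ and $L^2(0,\tau;\mathcal{H})$ norms of $\psi$ and $\psi'$.
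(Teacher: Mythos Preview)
Your proposal is correct, and for the direction $(\ref{ch_wd}) \Rightarrow (\ref{weakDer})$ it coincides with the paper's proof: both simply specialize to $\psi(t)=\xi(t)v$.

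For the direction $(\ref{weakDer}) \Rightarrow (\ref{ch_wd})$, however, the paper takes a different and shorter route. Rather than approximating $\psi$ by tensor products and invoking density of $D(0,\tau)\otimes\mathcal{V}$ in $D((0,\tau),\mathcal{V})$, the paper simply observes that condition $(\ref{weakDer})$ means $u\in\mathcal{W}(\mathcal{V},\mathcal{V}^*)$, and any $\psi\in D((0,\tau),\mathcal{V})$ also lies in $\mathcal{W}(\mathcal{V},\mathcal{V}^*)$. Then Theorem~\ref{thm:prop_std_SB_sp}~iii) (the integration-by-parts formula) applied to the pair $(u,\psi)$, together with $\psi(0)=\psi(\tau)=0$, gives $(\ref{ch_wd})$ in one line. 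This is more economical in context because Theorem~\ref{thm:prop_std_SB_sp} is already stated and cited from standard references, so no new approximation machinery is needed. Your density argument is valid and more self-contained, but the ``main obstacle'' you identify---density of the tensor product in the inductive-limit topology---is precisely what the paper sidesteps by leaning on the already-available Sobolev--Bochner theory.
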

\begin{proof}
The direct implication follows from Theorem \ref{thm:prop_std_SB_sp}, iii). To see that (\ref{ch_wd}) implies (\ref{weakDer}), test (\ref{ch_wd}) with $\xi v \in D((0,\tau),\mathcal{H})$, where $\xi \in D((0,\tau))$ and $v \in \mathcal{H}$.
\end{proof}

\subsection{Bochner-type spaces}

In order to treat the evolving family of Hilbert spaces $X= (X(t))_{t\in[0,\tau]}$, the idea is to connect the space $X(t)$ at any time $t \in [0,\tau]$ with some fixed space, for example with the initial space $X(0)$. Thus we construct the family of functions $\phi_t : X(0) \rightarrow X(t)$, which we call \textit{the (plain) pushforward map}. We denote the inverse of $\phi_t$ by $\phi_{-t} : X(t) \rightarrow X(0)$ and call it \textit{the (plain) pullback map}.  We want to introduce the special Bochner-type function space such that for every $t \in [0,\tau]$ we have $u(t) \in X(t)$. These spaces are introduced in \cite{EAS} and we exploit them   to define the solution space for the path-wise problem on a random tube domain. Note that in a random setting we have for every path $\omega$ a random family  $X(\omega)= (X(\omega, t))_{t\in[0,\tau]}$, where $X(\omega, t)$ denotes an appropriate function space on a random domain $D_t(\omega)$ and $X(0)$ is a fixed initial deterministic space. We state the main definitions and results from \cite{EAS}, that we apply in our setting.

\begin{definition}\label{def:comp_spaces}
The pair $\{X, (\phi_t)_{t \in [0,\tau]} \}$ is \emph{compatible} if the following conditions hold:

$\bullet$ for every $t \in [0,\tau], \,\,  \phi_t$ is a linear homeomorphism such that $\phi_0$ is the identity map

$\bullet$ there exists a constant $C_X$ which is independent of $t$ such that
\begin{eqnarray*}
\| \phi_tu \|_{X(t)} &\leq& C_X \| u \|_{X(0)} \quad \text{ for every } u \in X(0)  \\
\| \phi_{-t}u \|_{X(0)} & \leq& C_X \| u \|_{X(t)} \quad \text{ for every } u \in X(t)
\end{eqnarray*}

$\bullet$ the map $t \mapsto \| \phi_tu \|_{X(t)}$ is continuous for every $u \in X(0)$.
\end{definition}

Note that for the given family $(X(t))_{t \in (0,\tau)}$ there are usually many different mappings $\phi_t$ such that the pair $\{X, (\phi_t)_{t \in [0,\tau]} \}$ is compatible.

We  denote the dual operator of $\phi_t$ by $\phi_t^* : X^*(t) \rightarrow X^*(0)$. As a consequence of the previous conditions, we obtain that $\phi_t^*$ and
its inverse are also linear homeomorphisms which satisfy the following conditions
\begin{eqnarray*}
\| \phi_t^* f\|_{X^*(0)} &\leq& C_X \| f \|_{X^*(t)} \quad \text{for every } f \in X^*(t) \\
\| \phi_{-t}^*f \|_{X^*(t)} &\leq& C_X \| f \|_{X^*(0)} \quad \text{for every } f \in X^*(0).
\end{eqnarray*}

Now we can define the spaces of time-dependent functions whose domain is also changing in time by requiring that the pull-back of $u$ belongs to the fixed initial space.

\begin{definition}\label{def:L_X}
For a compatible pair $\{X, (\phi_t)_{t \in [0,\tau]}\}$ we define spaces
\begin{align*}
L^2_X &:= \left \{ u: [0,\tau] \ni t \mapsto (\bar{u}(t),t) \in  \!\!\! \underset{s\in[0,\tau]}{\bigcup}\!
X(s) \times \{s\}
\mid \phi_{- (\cdot) } \bar{u}(\cdot ) \in L^2(0,\tau;X(0)) \right \},  \\
L^2_{X^*} &:= \left \{ f: [0,\tau]  \ni t  \mapsto  (\bar{f}(t),t) \in  \!\!\! \underset{s\in[0,\tau]}{\bigcup}\!X^*(s) \times \{s\}
\mid 
\phi_{- (\cdot)}\bar{f}(\cdot) \in L^2(0,\tau;X^*(0)) \right \} .
\end{align*}
\end{definition}
Like the standard Bochner spaces, these spaces
consist of equivalence classes of functions agreeing almost everywhere in $[0, \tau ].$ Observe that previous spaces strongly depend on the  map $\phi_t$.
In the following we  identify $u(t)=(\overline{u}(t),t)$ with $\overline{u}(t)$, for brevity of notation.

In order to understand these spaces better, we  state their most important properties, stated in \cite[Lemma 2.10, Lemma 2.11]{EAS}. 

\begin{lemma}\label{isom_with_Boh_sp}(\!The isomorphism with standard Bochner spaces and the equivalence of norms) The maps 
\begin{align*}
L^2(0,\tau; X_0) \ni u &\mapsto \phi_{(\cdot)} u(\cdot) \in L^2_X \\
L^2(0,\tau; X_0^*) \ni f &\mapsto \phi_{-(\cdot)} f(\cdot) \in L^2_{X^*} 
\end{align*}
are isomorphisms.
Furthermore, the equivalence of norms holds
\begin{align*}
\frac{1}{C_X} \| u \|_{L^2_X} &\leq \|  \phi_{(\cdot)} u(\cdot) \|_{L^2(0,\tau; X_0)} \leq C_X \| u \|_{L^2_X} \quad \forall u \in L^2_X \\
\frac{1}{C_X} \| f \|_{L^2_{X^*}} &\leq \|  \phi_{-(\cdot)} f(\cdot) \|_{L^2(0,\tau; X_0^*)} \leq C_X \| f \|_{L^2_X} \quad \forall f \in L^2_{X^*} .
\end{align*} 
\end{lemma}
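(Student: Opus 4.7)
The statement is essentially a pointwise-in-$t$ consequence of Definition~\ref{def:comp_spaces}, integrated over $[0,\tau]$. The key structural fact is that for each $t$ the maps $\phi_t$ and $\phi_{-t}$ are mutual inverses that are bounded linear with operator norms at most $C_X$ uniformly in $t$; this transfers to an equivalence of $L^2$-norms after squaring and integrating.

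First I would verify that the two maps are well-defined and are each other's inverse. For $u \in L^2(0,\tau; X(0))$, setting $v(t) := \phi_t u(t)$ gives $\phi_{-t} v(t) = u(t)$ by $\phi_{-t} \circ \phi_t = \mathrm{id}_{X(0)}$, so $\phi_{-(\cdot)} v(\cdot) = u \in L^2(0,\tau; X(0))$, which is exactly the defining condition placing $v$ in $L^2_X$ (Definition~\ref{def:L_X}). Conversely, for $v \in L^2_X$ the pull-back $\phi_{-(\cdot)} v(\cdot)$ lies in $L^2(0,\tau; X(0))$ by definition, and $\phi_t \circ \phi_{-t} = \mathrm{id}_{X(t)}$ shows that the two assignments are mutual inverses. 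Linearity is immediate from linearity of each $\phi_t$. Note that measurability of $t \mapsto \phi_t u(t)$ into the ``bundle'' $\bigcup_s X(s) \times \{s\}$ needs no separate verification, since the defining requirement of $L^2_X$ is measurability of the pull-back $\phi_{-(\cdot)} v(\cdot)$, which is $u$ itself.

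For the norm equivalence I would apply the compatibility bounds pointwise: for each $t \in [0,\tau]$ and $u \in L^2(0,\tau; X(0))$,
\[
\|\phi_t u(t)\|_{X(t)} \leq C_X \|u(t)\|_{X(0)}, \qquad \|u(t)\|_{X(0)} = \|\phi_{-t}\phi_t u(t)\|_{X(0)} \leq C_X \|\phi_t u(t)\|_{X(t)}.
\]
Squaring, integrating over $t \in [0,\tau]$, and taking square roots gives the stated two-sided bound, interpreting $\|v\|_{L^2_X}^2 = \int_0^\tau \|v(t)\|_{X(t)}^2 \, dt$. The assertion for $L^2_{X^*}$ is proved identically using the dual compatibility bounds for $\phi_t^*$ and $\phi_{-t}^*$ noted just below Definition~\ref{def:comp_spaces}.

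There is no genuine obstacle in the argument: the isomorphism is almost built into Definition~\ref{def:L_X}, and the constants come straight from Definition~\ref{def:comp_spaces}. The only point that requires care is notational consistency in tracking the direction of push- and pull-back maps; in particular, the operator acting on $u \in L^2_X$ in the stated inequality must be read as $\phi_{-(\cdot)}$, since $\phi_t$ itself is not defined on $X(t)$.
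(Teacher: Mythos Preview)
Your argument is correct. The paper does not actually supply its own proof of this lemma; it simply records the result and points to \cite[Lemma~2.10, Lemma~2.11]{EAS}. What you wrote is exactly the standard verification one finds there: the bijection is forced by Definition~\ref{def:L_X} (since membership in $L^2_X$ is \emph{defined} by the pull-back lying in the ordinary Bochner space), and the two-sided norm bound follows from squaring the pointwise estimates in Definition~\ref{def:comp_spaces} and integrating in $t$. Your closing remark about the direction of the map in the displayed inequality is also well taken: as written, $\phi_{(\cdot)}u(\cdot)$ for $u\in L^2_X$ is a slip, and the intended operator on that line is $\phi_{-(\cdot)}$.
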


The spaces $L^2_X$ and $L^2_{X^*}$ are separable Hilbert spaces (\cite[Corollary 2.11]{EAS}) with the inner product defined as
\begin{eqnarray*}
(u,v)_{L^2_X} := \int_{0}^{\tau}(u(t),v(t))_{X(t)} dt \\
(f,g)_{L^2_{X^*}} := \int_{0}^{\tau}(f(t),g(t))_{X^*(t)} dt.
\end{eqnarray*}

For $f \in L^2_{X^*}$ and $u \in L^2_X$ the map
\[
t \mapsto \left< f(t), u(t) \right>_{X^*(t), X(t)}
\]
is integrable on $[0,\tau]$, see \cite[Lemma 2.13]{EAS}. Utilizing the integrability of this map and Fubini's theorem,  in \cite[Lemma 2.15]{EAS} the authors prove that the spaces $L^2_{X^*}$ and  $(L^2_X)^*$ are isometrically isomorphic.
 Furthermore, the  duality pairing of $f \in L^2_{X^*}$  with $u \in L^2_X$ is given by
\[
\left< f, u \right>_{L^2_{X^*}, L^2_X} =\int_0^\tau \left< f, u \right>_{X^*(t), X(t)} dt.
\]

\subsection{Function spaces on a random non-cylindrical domains}\label{sec:solution_space}

In order to treat the path-wise formulation on a random non-cylindrical domain we will consider for every $t$ and $\omega$,  a path-wise Gelfand triple
\[
V(\omega, t) \xhookrightarrow[i]{} H(\omega, t) \cong {H}^*(\omega, t)\xhookrightarrow[i']{} V^*(\omega, t).
\]
Given random velocity filed $\Vel$ induces a random flow $T_\Vel$ such that   $T_\Vel( \omega,t): D_0 \to D_t(\omega)$, more details on this will be presented in the next section. For every $\omega \in \Omega$ we  define the path-wise pullback operator 
$\phi_{-t}(\omega):  H(\omega, t) \rightarrow H(0)$ in the following way
\begin{equation}\label{path_flow}
(\phi_{-t}(\omega)u(\omega))(x) := u(\omega)T_\Vel(\omega,t,x) \quad \text{for every } x \in D_0, \,\, \omega \in \Omega.
\end{equation}

\begin{lemma}\label{compatibility}
For every $\omega$, the pairs $\left(H(\omega), (\phi_t(\omega))_{t \in [0,\tau]} \right)$ and $\left(V(\omega), (\restr{\phi_t(\omega)}{V_0})_{t \in [0,\tau]}\right)$ are compatible.
\end{lemma}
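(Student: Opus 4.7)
The plan is to fix an arbitrary sample $\omega \in \Omega$ and verify the three conditions of Definition \ref{def:comp_spaces} path-wise, using the regularity properties of the random flow $T_\Vel(\omega, \cdot)$ introduced in the excerpt. Here $H(\omega,t)$ and $V(\omega,t)$ will be the usual $L^2(D_t(\omega))$ and $H^1(D_t(\omega))$ spaces, with pullback $\phi_{-t}(\omega)$ given by composition with $T_\Vel(\omega,t)$ as in \eqref{path_flow}. Linearity of $\phi_{-t}(\omega)$ is immediate from \eqref{path_flow}, and $\phi_0(\omega)$ is the identity because $T_\Vel(\omega,0)$ is the identity on $D_0$.

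Next, I would establish the uniform bounds by change of variables. Writing $J_t(\omega,x) := \det D T_\Vel(\omega,t,x)$, the substitution $y = T_\Vel(\omega,t,x)$ yields
\[
\|\phi_t(\omega) u\|_{H(\omega,t)}^2 = \int_{D_t(\omega)} |u(T_\Vel^{-1}(\omega,t,y))|^2\, dy = \int_{D_0} |u(x)|^2 |J_t(\omega,x)|\, dx,
\]
so that $\|\phi_t(\omega) u\|_{H(\omega,t)} \leq \bigl(\esssup_{t,x}|J_t(\omega,x)|\bigr)^{1/2}\|u\|_{H(0)}$, with a symmetric estimate for $\phi_{-t}(\omega)$ involving $|J_t(\omega,\cdot)^{-1}|$. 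For the $V$-estimates, I would differentiate $(\phi_t u)(y) = u(T_\Vel^{-1}(\omega,t,y))$ by the chain rule, producing $\nabla(\phi_t u) = (\nabla u)\circ T_\Vel^{-1}\cdot D T_\Vel^{-1}$, and then change variables once more to control $\|\phi_t u\|_{V(\omega,t)}$ by a constant times $\|u\|_{V(0)}$, where the constant depends on $\omega$ through essential suprema of $D T_\Vel(\omega,\cdot)$, its inverse, and $J_t(\omega,\cdot)^{\pm 1}$ over $[0,\tau]\times D_0$. Since in Section 3 these quantities are assumed to be bounded path-wise uniformly in $t$ (both in the uniformly bounded and in the log-normal setting), such a constant $C_{X}(\omega) < \infty$ exists for $\mathbb{P}$-a.e.\ $\omega$ and serves as the required $C_X$ in Definition \ref{def:comp_spaces}. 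Bijectivity of $\phi_t(\omega)$ as a map between the corresponding spaces is then immediate: the inverse is $\phi_{-t}(\omega)$ and both directions are bounded, hence $\phi_t(\omega)$ is a linear homeomorphism.

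Finally, for the continuity of $t \mapsto \|\phi_t(\omega) u\|_{X(\omega,t)}$ for fixed $u \in X(0)$, I would again write
\[
\|\phi_t(\omega) u\|_{H(\omega,t)}^2 = \int_{D_0} |u(x)|^2 |J_t(\omega,x)|\, dx,
\]
and analogously for the $V$-norm with an additional factor built from $DT_\Vel^{-1}(\omega,t)$. The continuity then follows from the continuity in $t$ of $T_\Vel(\omega,\cdot)$ and its spatial derivatives (a consequence of the regularity assumptions on the velocity in Section 3) combined with the dominated convergence theorem, the uniform bounds from the previous step providing the majorant.

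The main obstacle is purely bookkeeping rather than conceptual: one has to keep track of exactly which norms of $T_\Vel(\omega,\cdot)$ enter the constant $C_X(\omega)$, because in the log-normal case this constant is $\omega$-dependent and not uniformly bounded. This is acceptable here since compatibility is only required path-wise, but the explicit dependence will be reused later when proving $L^p(\Omega)$ bounds and measurability for the solution of the pulled-back heat equation.
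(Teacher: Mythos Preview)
Your proposal is correct and follows the same change-of-variables route that the paper has in mind; the paper itself does not spell out the argument but simply refers to \cite[Lemma~3.3]{V} and remarks that the constant $C_X$ is deterministic under the uniform boundedness assumptions while it is $\omega$-dependent in the log-normal case. Your write-up is effectively a fleshed-out version of that reference, including the observation about the $\omega$-dependence of $C_X(\omega)$, so there is nothing to correct.
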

\begin{proof}
The proof is similar to the proof of \cite[Lemma 3.3]{V} and the constants $C_X$ from the Definition \ref{def:comp_spaces} do not depend on the sample $\omega$, under suitable uniform bound assumption about the velocity field. However, note that in the log-normal case, constants do depend on the sample. 
\end{proof} 

From the previous lemma and Definition \ref{def:L_X} it follows that we can consider spaces $L^2_{V(\omega)}$ and $L^2_{H(\omega)}$. To define the solution space for parabolic PDEs on a random non-cylindrical domain, we introduce a material derivative which  takes into account the spatial movement of the domain. In order to  consider the material derivative of random functions, we apply the abstract setting from  \cite[Chapter 2.4]{EAS}.

\noindent
We define the spaces of pushed-forward continuously differentiable functions by
\begin{equation*}
\mathcal{C}_X^j := \{ u \in L^2_X  \mid \phi_{-(\cdot)}u(\cdot) \in \mathcal{C}^j([0,\tau], X_0\} \, \text{ for } j \in \{0,1,\dots \}.
\end{equation*}
In a random setting, the evolving family $X_t$  also depends on a sample on $\omega$, in a sense that $X_t = X_t(\omega)$. Since we  use the path-wise perspective for the equations on a random domain, 
for the simplicity of the notation we  formulate the next results in the deterministic framework, i.e. omit writing $\omega$.
\begin{definition}
For $u \in \mathcal{C}_H^1$ \emph{the strong material derivative} $\dot{u} \in  \mathcal{C}_X^0$ is defined by
\begin{equation}\label{material derivative}
\dot{u}(t) = \phi_t \left(\frac{d}{dt}\phi_{-t}u(t) \right) \quad \text{for every } t \in [0,\tau].
\end{equation}
\end{definition}
We can derive the following explicit formula for the {\textit{strong material derivative}}, (see \cite{EAS})
\begin{equation}
\dot{u}(t,\omega,x) = u_t(t,\omega,x) + \nabla u(t,\omega,x) \cdot \Vel(\omega,t,x), \quad \text{for every } x \in D(\omega,t), \, \omega \in \Omega, \label{strong md}
\end{equation}
where $\Vel$ is the velocity of the domain.

Just as in the static case, it might happen that the equation does not have a solution if requesting  $u \in  \mathcal{C}_H^1$. One can define a weak material derivative that needs less regularity. In addition to the case when we consider a fixed domain, an extra term, defined by the bilinear form $c$, that  takes into account the movement of the domain, appears. 

\begin{definition}
We say that $\partial^\bullet{u} \in L^2_{V^*}$ is \emph{a weak material derivative} of  $u \in L^2_V$ if and only if
\begin{align}\label{def:weak mat der}
\begin{split}
\int_0^\tau \left< \partial^\bullet{u}(t), \eta(t) \right>_{V^*(t),V(t)} = -\int_0^\tau \left( u(t), \dot{\eta}(t) \right)_{H(t)} - \int_0^\tau c(t;u(t),\eta(t)) 
\end{split}
\end{align}
holds for all $\eta \in \mathcal{D}_\Vel(0,\tau) = \{ \eta \in L^2_V \mid \phi_{-( \cdot )}\eta(\cdot) \in \mathcal{D}((0,\tau);V_0) \} $ and  $c(t, u(t),\eta(t)) = \int_{D(t)} u(t,x) \eta(t,x) \nabla_{\Gamma(t)} \cdot \Vel(t,x).$
\end{definition}
In a probabilistic setting the bilinear form $c(t, u(t),\eta(t)) $ is defined in the analogue way, including the dependence on a sample. Note that it can be directly shown that if it exists, the weak material derivative is unique and every strong material derivative is also a weak material derivative.

The solution space, based on the general framework, is defined by
\begin{equation}\label{def:path_sol_space}
W(V(\omega),V^*(\omega)) := \{ u \in L^2_{V(\omega)} \mid \partial^\bullet{u} \in L^2_{V^*(\omega)} \}.
\end{equation}
In order to prove that the solution space is a Hilbert space and  that it has some additional properties, one can  connect $W(V,V^*)$ with the standard Sobolev-Bochner space $\mathcal{W}(V_0, V_0^*)$ defined by (\ref{def:stdSBsp}) for which these properties are known. 
The previous two types of spaces are connected in a natural way, i.e. that the pull-back of the functions from the solution space belongs to the Sobolev-Bochner space and vice versa. In addition, we also have the equivalence of the norms. In this case we say that there exists an {\textit{evolving space equivalence}} between the spaces $ W(H,H^*)$ and $\mathcal{W}(H_0,H_0^*)$ (for the proof see \cite[Theorem 2.33]{EAS}). As the consequence of this equivalence and Theorem \ref{thm:prop_std_SB_sp} we have the following results.

\begin{lemma}The solution space $W(V,V^*)$ is a Hilbert space with the inner product defined via
\begin{equation*}
(u,v)_{W(V,V^*)} = \int_0^\tau  (u(t), v(t))_{V(t)} + \int_0^\tau  (\partial^\bullet u(t), \partial^\bullet v(t))_{V^*(t)}.
\end{equation*}
\end{lemma}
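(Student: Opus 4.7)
The strategy is to transfer the Hilbert space structure from the standard Sobolev-Bochner space $\mathcal{W}(V_0, V_0^*)$, which is known to be a Hilbert space by Theorem \ref{thm:prop_std_SB_sp} (or rather the inner product structure stated just after \eqref{def:stdSBsp}), to $W(V,V^*)$ via the evolving space equivalence stated in the sentence preceding the lemma. The isomorphism furnished there sends $u \in W(V,V^*)$ to $\phi_{-(\cdot)} u(\cdot) \in \mathcal{W}(V_0, V_0^*)$, with the material derivative $\partial^\bullet u$ corresponding to $\tfrac{d}{dt}(\phi_{-(\cdot)} u)$ under pullback (this is the defining compatibility of the weak material derivative with the weak time derivative in the fixed-domain picture).

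\textbf{Step 1: Inner product axioms.} First I would verify that
\[
(u,v)_{W(V,V^*)} := \int_0^\tau (u(t),v(t))_{V(t)}\, dt + \int_0^\tau (\partial^\bullet u(t), \partial^\bullet v(t))_{V^*(t)}\, dt
\]
is well-defined and an inner product. Bilinearity and symmetry follow pointwise in $t$ from the corresponding properties of $(\cdot,\cdot)_{V(t)}$ and $(\cdot,\cdot)_{V^*(t)}$, together with integrability (which is exactly the defining condition $u \in L^2_V$, $\partial^\bullet u \in L^2_{V^*}$). Positive definiteness: if $(u,u)_{W(V,V^*)}=0$ then $\|u(t)\|_{V(t)}=0$ a.e., so $u=0$ in $L^2_V$.

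\textbf{Step 2: Completeness.} Let $(u_n)$ be Cauchy in $W(V,V^*)$. By Lemma \ref{isom_with_Boh_sp} and the evolving space equivalence, the pullbacks $\hat{u}_n := \phi_{-(\cdot)} u_n(\cdot)$ form a Cauchy sequence in $\mathcal{W}(V_0,V_0^*)$: the equivalence of norms controls the $L^2(0,\tau;V_0)$ norm of $\hat u_n$ by $\|u_n\|_{L^2_V}$, and controls the $L^2(0,\tau;V_0^*)$ norm of $\hat u_n'$ by $\|\partial^\bullet u_n\|_{L^2_{V^*}}$ (since under the equivalence $\partial^\bullet u_n$ corresponds to $\hat u_n'$). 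Since $\mathcal{W}(V_0,V_0^*)$ is a Hilbert space, there exists $\hat u \in \mathcal{W}(V_0,V_0^*)$ with $\hat u_n \to \hat u$. Set $u(t) := \phi_t \hat u(t)$; then $u \in L^2_V$ and $\partial^\bullet u \in L^2_{V^*}$ with $\phi_{-t}(\partial^\bullet u)(t) = \hat u'(t)$, again by the evolving space equivalence. Using the equivalence of norms in the opposite direction, $u_n \to u$ in $W(V,V^*)$.

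\textbf{Main obstacle.} The one nontrivial point is the identification, under the isomorphism, of the weak material derivative with the classical weak time derivative of the pullback; everything else is a bookkeeping translation of the known result for $\mathcal{W}(V_0,V_0^*)$. This identification, however, is precisely the content of the evolving space equivalence cited from \cite[Theorem 2.33]{EAS}, so I would invoke it rather than reprove it. The remaining subtlety is making sure the constants $C_X$ in Definition \ref{def:comp_spaces} are finite for each fixed $\omega$ (guaranteed by Lemma \ref{compatibility}); since the statement is path-wise, no uniformity in $\omega$ is required here.
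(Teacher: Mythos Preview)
Your proposal is correct and follows exactly the route the paper indicates: the paper does not give a proof but states the lemma as a direct consequence of the evolving space equivalence \cite[Theorem 2.33]{EAS} between $W(V,V^*)$ and $\mathcal{W}(V_0,V_0^*)$, together with the known Hilbert structure of the latter. You have simply written out the details of that transfer argument, so there is nothing to add.
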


\begin{lemma}\label{lem:sol_space}  The following statements hold:
\begin{enumerate}
\item[i)] Space $W(V,V^*)$ is embedded into $C_H^0$. 

\item[ii)] The embedding $D_V([0,\tau]) \subset W(V,V^*)$ is dense.

\item[iii)] For every $u \in W(V,V^*)$ it holds 
$
\max_{t \in [0,\tau]} \| u(t) \|_{H(t)} \leq C \| u \|_{W(V,V^*)}.
$
\end{enumerate}
\end{lemma}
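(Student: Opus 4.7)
The plan is to invoke the evolving space equivalence mentioned in the paragraph above the lemma: the plain pullback $u \mapsto \phi_{-(\cdot)}u(\cdot)$ is an isomorphism from $W(V,V^*)$ onto the standard Sobolev--Bochner space $\mathcal{W}(V_0,V_0^*)$. Once this identification is in hand, each of the three claims reduces to its classical counterpart stated in Theorem \ref{thm:prop_std_SB_sp}.

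First I would combine Lemma \ref{isom_with_Boh_sp} with the definition \eqref{def:weak mat der} to show that $u \in L^2_V$ admits $\md u \in L^2_{V^*}$ if and only if $\hat{u} := \phi_{-(\cdot)}u(\cdot) \in L^2(0,\tau;V_0)$ has a weak derivative in $L^2(0,\tau;V_0^*)$, and that the two derivatives are related by the pullback of $\md u$ modulo the lower-order term coming from $c(t;\cdot,\cdot)$. On the level of test functions this is obtained by transporting \eqref{def:weak mat der} via $\phi_t$ and invoking the standard characterization \eqref{ch_wd}: for every $\xi \in D((0,\tau))$ and $v \in V_0$, choosing $\eta(t) = \xi(t)\phi_t v \in \mathcal{D}_\Vel(0,\tau)$ and noting $\dot\eta(t) = \xi'(t)\phi_t v$ reduces the right-hand side of \eqref{def:weak mat der} to the pairing that defines $\hat u'$. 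The resulting correspondence $u \leftrightarrow \hat u$ is a topological isomorphism between $W(V,V^*)$ and $\mathcal{W}(V_0,V_0^*)$, with norm constants controlled by the compatibility constants $C_V, C_H$ from Definition \ref{def:comp_spaces}.

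For (i), given $u \in W(V,V^*)$ we have $\hat u \in \mathcal{W}(V_0,V_0^*) \hookrightarrow C([0,\tau],H_0)$ by Theorem \ref{thm:prop_std_SB_sp}(i), which by the very definition of $\mathcal{C}_H^0$ means $u \in \mathcal{C}_H^0$. For (iii) I would chain the compatibility bound $\|u(t)\|_{H(t)} = \|\phi_t\hat u(t)\|_{H(t)} \leq C_H \|\hat u(t)\|_{H_0}$ with the classical continuous embedding estimate $\max_{t\in[0,\tau]} \|\hat u(t)\|_{H_0} \leq C\|\hat u\|_{\mathcal{W}(V_0,V_0^*)}$, and then with the equivalence of norms supplied by Lemma \ref{isom_with_Boh_sp} together with the isomorphism of the derivatives established above. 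For (ii), Theorem \ref{thm:prop_std_SB_sp}(ii) provides density of $D([0,\tau],V_0)$ in $\mathcal{W}(V_0,V_0^*)$; applying the pushforward $\phi_{(\cdot)}$ to any approximating sequence $(\hat\eta_n)$ yields a sequence in $D_V([0,\tau])$ that converges to $u$ in $W(V,V^*)$, because the pushforward is a topological isomorphism.

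Finally, that $W(V,V^*)$ is a Hilbert space (needed for the preceding lemma, and used implicitly here) is obtained by pulling back the inner product of $\mathcal{W}(V_0,V_0^*)$, which produces a norm equivalent to the intrinsic one on $W(V,V^*)$. The main technical obstacle is the first step: rigorously verifying the $\md u \leftrightarrow \hat u'$ correspondence, since the bilinear form $c(t;\cdot,\cdot)$ carries divergence factors of $\Vel$ and the identity $\dot\eta = \phi_t(\tfrac{d}{dt}\phi_{-t}\eta)$ must be interpreted carefully. This is precisely the content of \cite[Theorem 2.33]{EAS}, and requires the regularity assumptions on the velocity field that will be laid out in Section 3; once granted, properties (i)--(iii) are immediate transfers of the deterministic Sobolev--Bochner facts.
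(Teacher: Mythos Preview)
Your proposal is correct and follows exactly the approach the paper indicates: the paper does not give a detailed proof but simply states that Lemma~\ref{lem:sol_space} is a consequence of the evolving space equivalence \cite[Theorem 2.33]{EAS} together with Theorem~\ref{thm:prop_std_SB_sp}, which is precisely the transfer argument you spell out. Your write-up supplies the details the paper omits, including the correct identification of the $\md u \leftrightarrow \hat u'$ correspondence as the main technical point deferred to \cite{EAS}.
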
 

 As a consequence, the evaluation $t \mapsto u(t)$ is well-defined and we are able to specify initial conditions for the PDE. Furthermore, we can define the subspace
 \begin{equation} \label{zeroSolSpace}
 W_0(V,V^*) := \{ u \in W(V,V^*) | u(0) = 0 \}.
 \end{equation}
Note that $W_0(V, V^*)$ is a Hilbert space, as a closed linear subspace of $W(V,V^*)$.

To discuss the case when more regularity is feasible, in particular if  the weak derivative of a function has more regularity, we define another function space.
\begin{definition}
Let
\begin{equation}\label{subspace}
W(V,H):= \{ u \in L^2_V \, | \, \partial^\bullet u \in L^2_H \}.
\end{equation}
\end{definition}

In order to prove the properties of the previous space, similarly as before,  we  connect $W(V,H)$ with the standard Sobolev-Bochner space $\mathcal{W}(V_0,H_0) \equiv \{ v \in L^2(0,\tau;V_0) \, | \, v' \! \in L^2(0,\tau;H_0)\}$. As a consequence, 
$W(V(\omega),H(\omega))$ is a Hilbert space for every $\omega \in \Omega$.

\section{Random tubes}\label{sec:randomTubes}

Let $D_0 \subset \R^d$ be an open, bounded domain with a Lipschitz boundary. Furthermore, let 
$\Vel: \Omega \times [0,\tau] \times \R^d \to \R^d$ be a random vector field, that determines a random tube $Q_\Vel(\omega)$, for any $\omega \in \Omega$. In the uniformly bounded case we assume the existence of a hold-all domain, i.e. we assume that  there exists a bounded open set $B$ such that $Q_\Vel(\omega)$ remains in $(0,\tau) \times B$, and that the velocity field is defined on this domain $B$, and not on the whole space $\R^d$.
How much the set $B$ varies from $D_0$, depends on how big the stochastic fluctuations are. Note that in the log-normal case $B=\R^d$, and we always write $B$ to cover both situations.

To a vector field $\Vel(\omega)$ we can associate its flow $T_\Vel(\omega)$. First, for fixed $\omega \in \Omega$ and $X \in \overline{D}_0 $ we consider the path-wise solution $x_{_\Vel}(\omega, \cdot, X)$ of the ODE
\begin{eqnarray}
\frac{d x_{_\Vel}}{dt}(\omega,t,X) &=& V(\omega, t, x_{_\Vel}(\omega,t,X)) \quad t \in[0,\tau] \\
x_{_\Vel}(\omega,0,X) &=& X.
\end{eqnarray}
For fixed $t$ and $X$, by Fubini's theorem, $\omega \mapsto x_{_\Vel}(\omega,t,X)$ is a measurable function.

Then, for any $\omega \in \Omega$ and $t \in [0,\tau]$, we consider the transformation
\begin{align*}
T_\Vel(\omega, t)&: \overline{B} \to \overline{B} \\
X \mapsto T_\Vel(\omega, t)(X) &:= x_{_\Vel}(\omega, t, X).
\end{align*}
We denote the mapping $(\omega, t, X) \mapsto  T_\Vel(\omega, t)(X)$ by $T_\Vel$. For brevity, and when there is no danger of confusion, we do  not write the associate vector field $\Vel$ but  we just write $T_t(\omega,X) \equiv T_\Vel(\omega, t)(X)$.  The measurability of $x$ implies the measurability of $\omega \mapsto T_t(\omega,X)$, for fixed $t$ and $X$.

Now, to a sufficiently smooth vector field $\Vel(\omega )$ we  associate a tube $Q_{\Vel,\tau}(\omega)$ defined by
\[
Q_{\Vel,\tau}(\omega) : = \bigcup_{t \in (0,\tau)} D_t(\omega)\times \{ t \} \qquad Q_0(\omega) := D_0,
\]
where $D_t(\omega):= T_t(\omega)(D_0)$. Similarly as for the flow, we  use the notation  $Q_\Vel(\omega) \equiv Q(\omega)$.  According to this notation,  one should consider  Definition \ref{def:L_X} of Bochner type spaces in path-wise sense for $T_t(\omega)$ instead of $\phi_t$, i.e. as already announced, $T_t(\omega)$ is a plain push-forward.  

\begin{remark}
Conversely, given a sample $\omega$ and a random tube $Q(\omega)$ with enough smoothness of  a lateral boundary that ensures the existence of the outward normal, we can associate to $Q(\omega)$ a random smooth vector field $\Vel(\omega)$ whose associated flow satisfies $ T_\Vel(\omega,t)(D_0)=D_t(\omega) \subset \R^d, \forall t \in [0,\tau]$.
\end{remark}

The relation between the regularity of the velocity field $\Vel(\omega)$ and the regularity of its associated flow $T_t(\omega)$ has been investigated using the general theory of shape calculus (for general results see for example \cite[Ch 4, Th 5.1]{DZ}). Here we  state weaker results that are sufficient for our analysis. These results are  presented in \cite[Proposition 2.1, Proposition 2.2]{DZEul} and \cite{DzTube}. First, let us state the assumptions about the velocity field.

\begin{ass}\label{ass:V}
The velocity field satisfies the following regularity assumption
\begin{equation}\label{V1general}
\Vel(\omega) \in C([0,\tau], W^{k,\infty}(B, \R^d)) \quad \text{ for a.e. } \omega \text{ and } k \geq 1.
\end{equation}
\end{ass}

\begin{ass}\label{ass:V1} For the unit outward normal field $n_B \in \R^d$ to $B$ we assume
\begin{equation}\label{V2general}
\Vel(\omega, t) \cdot n_B = 0 \quad \text{ on } \partial B, \quad \text{ for a.e. } \omega.
\end{equation}
\end{ass}

The assumption (\ref{V2general})  ensures that the transformation $T_t$ is one-to-one homeomorphism which maps $\overline{B}$ to  $\overline{B}$ (cf. \cite[pp. 87–-88]{Dug}). In particular, it maps the interior points onto interior points and the boundary points onto
boundary points. Thus, for every $t \in [0,\tau]$ we can consider the transformation $T_\Vel(t)^{-1} \equiv T_t^{-1} : \overline{B} \to \R^n$. Note that $T_t^{-1}$ is the flow at $s=t$ of the velocity filed $\tilde{\Vel}_t(s):= - \Vel(t-s)$. In the log-normal setting,  we assume  that the velocity field $\Vel$ is defined on the whole $\R^d$, this could be assumed also in the uniformly bounded setting. In this case the assumption (\ref{V2general}) is not needed and the analogue regularity results hold for the flow, see \cite[Theorem 4.1]{DZ}. 

\begin{remark}
Instead of (\ref{V2general}), one can make a more general assumption that $\pm \Vel(\omega,t,x)$ belongs to a so-called s Bouligand’s contingent cone. For more details see \cite[Ch. 5]{DZ}. 
\end{remark}

The following lemma  \cite[Proposition 2.1, 2.2]{DZEul} states the regularity results about the flow and will be used in a path-wise sense.

 \begin{lemma}
 Let  Assumption \ref{ass:V} hold. Then there exists a unique associated flow $T_\Vel$ that is a solution of
 \begin{equation}\label{flow_rel}
 \frac{d}{dt}T(t,\cdot) = \Vel(t,T(t, \cdot)), \quad T(0) = Id.
 \end{equation}
 such that  
 \begin{equation*}
 T_\Vel \in C^1([0,\tau],W^{k-1,\infty}({B},\R^d)) \cap C([0,\tau], W^{k,\infty}(B, \R^d)).
   \end{equation*}
 Moreover, 
 \[
 T^{-1}_\Vel \in C([0,\tau], W^{k,\infty}(B, \R^d)).
 \]
 \end{lemma}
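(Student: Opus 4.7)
The strategy is to apply the classical Cauchy--Lipschitz theory path-wise in $\omega$ and to bootstrap the spatial regularity by differentiating the flow equation in $X$ and closing successive Gr\"onwall estimates. Since the statement is formulated in a path-wise sense, I may fix $\omega$ throughout and drop it from the notation.

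First, Assumption \ref{ass:V} with $k\geq 1$ in particular gives $\Vel\in C([0,\tau],W^{1,\infty}(B,\R^d))$, so $\Vel(t,\cdot)$ is globally Lipschitz on $\overline{B}$ with Lipschitz constant uniform in $t$. Picard--Lindel\"of then yields a unique $C^1$-in-time solution of (\ref{flow_rel}), and Assumption \ref{ass:V1} ($\Vel\cdot n_B=0$ on $\partial B$) implies, by the argument in \cite[pp.\ 87--88]{Dug} already cited in the text, that $T_t(\overline{B})\subset\overline{B}$. Continuous dependence on initial data gives $T\in C([0,\tau],C^0(\overline{B},\R^d))$ as a starting point, and the flow is in fact a homeomorphism for every $t$.

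To upgrade to $W^{k,\infty}$-regularity in space I would differentiate (\ref{flow_rel}) in $X$. The first spatial derivative $DT(t,X)$ satisfies the linear variational equation
\[ \frac{d}{dt}DT(t,X) \;=\; D\Vel(t,T(t,X))\,DT(t,X),\qquad DT(0,X)=I, \]
and since $D\Vel\in C([0,\tau],L^\infty(B))$, Gr\"onwall's inequality bounds $\|DT(t,\cdot)\|_{L^\infty}$ by $\exp\bigl(\tau\|\Vel\|_{C([0,\tau];W^{1,\infty})}\bigr)$. Higher spatial derivatives $D^j T$ for $2\leq j\leq k$ are handled inductively: the Fa\`a di Bruno formula applied to the ODE produces a linear equation for $D^j T$ whose inhomogeneity is a polynomial in $D^l T$ for $l<j$ and $D^i\Vel\circ T$ for $i\leq j$; all these quantities are already controlled in $L^\infty$ by the induction hypothesis together with Assumption \ref{ass:V}, so another Gr\"onwall step closes the estimate. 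The $C^1([0,\tau],W^{k-1,\infty})$ regularity in time then follows by reading off (\ref{flow_rel}) directly: the right-hand side $\Vel(t,T(t,\cdot))$ lies in $W^{k-1,\infty}$ by the chain rule for compositions of Sobolev functions with bi-Lipschitz maps, with only one derivative lost because the outer factor $\Vel$ has the full $W^{k,\infty}$ regularity.

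Finally, for the inverse I exploit the observation already made in the text: $T_t^{-1}$ is the flow at $s=t$ of the reversed field $\tilde{\Vel}_t(s):=-\Vel(t-s)$, which satisfies Assumption \ref{ass:V} and Assumption \ref{ass:V1} with identical bounds. Applying the regularity statement just proved to $\tilde{\Vel}_t$ yields $T^{-1}\in C([0,\tau],W^{k,\infty}(B,\R^d))$. The main technical obstacle is the bookkeeping in the inductive step for higher-order spatial derivatives -- the repeated chain rule for compositions in $W^{j,\infty}$ is notationally heavy and uniform control of the Jacobian $\det DT$ away from zero has to be tracked -- but conceptually this is a textbook Gr\"onwall argument, carried out in detail in \cite[Ch.\ 4, Th.\ 5.1]{DZ}.
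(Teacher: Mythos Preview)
Your sketch is correct and follows the classical route that the cited references \cite{DZEul,DZ} develop in detail. Note that the paper itself does not give a proof of this lemma: it is stated as a quotation of \cite[Proposition 2.1, 2.2]{DZEul} and used path-wise, so there is no ``paper's own proof'' to compare with beyond that citation. Your argument---Picard--Lindel\"of for existence, the variational ODE plus Gr\"onwall for $DT$ and its higher derivatives, reading time-regularity off the flow equation, and the reversed-field trick $\tilde{\Vel}_t(s)=-\Vel(t-s)$ for the inverse---is exactly the strategy underlying those references (and the reversed-field observation is the one the paper spells out in the paragraph preceding the lemma). Two minor points you gloss over but that the full proofs in \cite{DZ} handle: (i) the passage from ``the formal variational equation has a bounded solution'' to ``$T(t,\cdot)$ is actually differentiable in $X$ with this derivative'' requires a short difference-quotient argument, and (ii) for $T^{-1}$ the parameter $t$ enters both as the terminal time and through $\tilde{\Vel}_t$, so joint continuity in $t$ needs one extra line; neither affects the overall correctness of your outline.
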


For our analysis we need more regularity of the inverse transformation $T_t^{-1}$, i.e. the same as for $T_t$. Utilizing the implicit function theorem, better regularity result for $T_t^{-1}$ can be obtained on some subinterval $[0,\tau' ]$, see \cite[Proposition 2.2]{DzTube}.

\begin{lemma}\label{regTinv}
There exists $\tau' \in (0,\tau]$ such that $T_\Vel^{-1} \in C^1([0,\tau'],W^{k-1,\infty}({B},\R^d)) .$
\end{lemma}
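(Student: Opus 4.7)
My plan is to upgrade the continuity $T_\Vel^{-1} \in C([0,\tau], W^{k,\infty}(B, \R^d))$ from the previous lemma to $C^1$-differentiability in time (with values in $W^{k-1,\infty}$) on a short interval, by applying the implicit function theorem in Banach spaces to the functional equation $F(t, S) := T_\Vel(t, \cdot) \circ S - \mathrm{Id}_B = 0$ near the point $(t, S) = (0, \mathrm{Id}_B)$.

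I would set $E := W^{k-1,\infty}(B, \R^d)$ and take $U \subset E$ to be an open neighborhood of $\mathrm{Id}_B$ consisting of bi-Lipschitz maps of $\overline{B}$ into $\overline{B}$. The previous lemma supplies $T_\Vel \in C^1([0,\tau], W^{k-1,\infty}) \cap C([0,\tau], W^{k,\infty})$, and I would use this joint regularity to verify that $F : [0,\tau] \times U \to E$ is of class $C^1$: the partial time derivative $\partial_t F(t, S)(x) = \Vel(t, S(x))$ is continuous into $E$ by the $W^{k-1,\infty}$ chain rule combined with $\Vel \in C([0,\tau], W^{k,\infty})$, while the partial Fr\'echet derivative $D_S F(t, S)[h](x) = DT_\Vel(t, S(x))\, h(x)$ lands in $E$ because $DT_\Vel(t, \cdot) \in W^{k-1,\infty}$.

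At $t = 0$ we have $T_\Vel(0, \cdot) = \mathrm{Id}_B$, so $D_S F(0, \mathrm{Id}_B) = I_E$ is invertible. By continuity of $t \mapsto D_S F(t, \mathrm{Id}_B) \in \mathcal{L}(E, E)$, a Neumann-series argument furnishes $\tau' \in (0, \tau]$ and a neighborhood $V$ of $\mathrm{Id}_B$ on which $D_S F(t, S)$ stays invertible. The implicit function theorem then produces a unique $C^1$ curve $t \mapsto S(t) \in V$ on $[0, \tau']$ with $F(t, S(t)) = 0$; by uniqueness of inverses, $S(t)$ must coincide with $T_\Vel^{-1}(t, \cdot)$, and differentiating $T_\Vel(t, T_\Vel^{-1}(t, \cdot)) = \mathrm{Id}_B$ in $t$ yields the explicit formula $\partial_t T_\Vel^{-1}(t, x) = -[DT_\Vel(t, T_\Vel^{-1}(t, x))]^{-1} \Vel(t, x)$, confirming consistency.

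I expect the main obstacle to be the composition calculus in Sobolev spaces: one has to show carefully that right-composition by a map in $U$ defines a $C^1$ operator from $W^{k,\infty}$ into $W^{k-1,\infty}$, and this consumes exactly the one extra derivative of spatial regularity supplied by the preceding lemma. This one-derivative loss, together with the fact that the Neumann bound on $[DT_\Vel(t, \cdot)]^{-1}$ in $W^{k-1,\infty}$ only survives while $T_\Vel(t, \cdot)$ stays close to $\mathrm{Id}_B$ in that norm, is what forces the conclusion to hold on a (possibly small) subinterval $[0, \tau']$ rather than on all of $[0, \tau]$.
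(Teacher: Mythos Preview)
Your proposal is correct and follows exactly the route the paper indicates: the paper does not give a self-contained proof but simply states that the result is obtained ``utilizing the implicit function theorem'' and refers to \cite[Proposition 2.2]{DzTube}. Your outline---applying the Banach-space implicit function theorem to $F(t,S)=T_\Vel(t,\cdot)\circ S-\mathrm{Id}_B$ near $(0,\mathrm{Id}_B)$, with the loss of one spatial derivative coming from the composition calculus and the local-in-time conclusion coming from the Neumann-series invertibility of $D_SF$---is precisely the mechanism behind that cited proposition.
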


Observe that in our setting we  consider Lemma \ref{regTinv} path-wise. Thus, for every $\omega$ there exists $\tau'(\omega) \in (0,\tau]$. For this reason we need to make an additional assumption to avoid that $\tau'(\omega)$ converges to zero. From now on, without loss of generality, we assume the existence of a deterministic constant $\tau_0$ such that 
\[
0< \tau_0 \leq \tau'(\omega) \leq \tau \quad \forall \omega
\]
and consider our problem on the time interval $[0,\tau_0]$. By abuse of notation, we continue to write $\tau$ for $\tau_0$. Hence, we have that
 \begin{equation}\label{regT}
 T_\Vel,  T^{-1}_\Vel \in C^1([0,\tau],W^{k-1,\infty}({B},\R^d)) \cap C([0,\tau], W^{k,\infty}(B, \R^d)).
   \end{equation}

Assuming that $\overline{B}$ has enough regular shape, such as bounded, open, path-connected and locally Lipschitz subset of $\R^d$, from \cite[Ch 2, Th 2.6]{DZ}, we infer
$$W^{k+1,\infty}(B, \R^d) ) = C^{k,1}(\overline{B}, \R^d) \quad \text{ and } \quad C^{k,1}(\overline{B}, \R^d) \hookrightarrow C^k(\overline{B}, \R^d).$$
In particular, in for our setting it is sufficient to assume that $k=2$ 
and consider the following regularity assumption:

\begin{ass}\label{ass:RV}
The velocity field satisfies the following regularity assumptions
\begin{equation}\label{V1}
\Vel(\omega) \in C([0,\tau], C^2(\overline{B}, \R^d)) \quad \text{ for a.e. } \omega.
\end{equation}
\end{ass}

\begin{ass}\label{ass:URV}
\begin{equation}\label{V2}
\Vel(\omega, t) \cdot n_B = 0 \quad \text{ on } \partial B \quad \text{ for a.e. } \omega.
\end{equation}
\end{ass}
Then, according to  \ref{regT},
we obtain the following regularity of the associated flow and its inverse
\begin{equation}\label{T}
T(\omega), T^{-1}(\omega) \in C^1([0,\tau], C(\overline{B},\R^d))  \cap C([0,\tau], C^2(\overline{B},\R^d))  .
\end{equation}

\begin{remark}
In the literature, a standard assumption for non-cylindrical problems is a monotone or regular (Lipschitz) variation of the domain $D_t$.
The weaker assumptions on time-regularity of the boundary are considered in \cite{BG} . Namely, the authors assume only the H\"older regularity for the variation of the domains. The motivating example for this kind of assumption is a  stochastic evolution problem in the whole space $\R^d$. 

Moreover, note that if $D_0$ is a Lipschitz domain and the transformation $T$ is bijective and bi-Lipschitz, then the transformed domain $D_t$ is not necessarily Lipschitz, see Zerner's example. However, bijective bi-Lipschitz transformations that are also $C^1$-diffeomorphisms  of the space do preserve the class of bounded Lipschitz domains \cite[Thm. 4.1]{lipschDomain}.
\end{remark}

In view of Assumption \ref{ass:RV}, spatial domains $D_t(\omega)$  in $\R^d$ are obtained from a base domain $D_0$ by a $C^2$-diffeomorphism, which is continuously differentiable in the time variable.
The $C^1$ dependence in time indicates that we do not have an overly rough evolution in time, and $C^2$ regularity in space means that topological properties are preserved along time.  

As a consequence of (\ref{T}), the following path-wise bound holds
\[
\|T(\omega)\|_{C([0,\tau], C^2(\overline{B},\R^d))}, \|T^{-1}(\omega)\|_{C([0,\tau], C^2(\overline{B},\R^d))}\leq C_T(\omega) \quad \text{ for a.e. } \omega.
\]

Let $DT_t(\omega)$ and $DT_t^{-1}(\omega)$ denote the Jacobian matrices of $T_t(\omega)$ and $T_t^{-1}(\omega)$, respectively. From 
(\ref{T}), we infer
\begin{equation}\label{DTreg}
DT(\omega), DT^{-1}(\omega) \in C^1([0,\tau], C^1(\overline{B},\R^d)) \quad \text{and} \quad
\frac{d}{dt}T(\omega) \in C([0,\tau], C(\overline{B},\R^d)) \quad \text{for a.e. } \omega.
\end{equation}

As a consequence we have
\begin{align}
\|DT(\omega)\|_{C([0,\tau], C^{1}(\overline{B},\R^d))}, \|DT^{-1}(\omega)\|_{C([0,\tau], C^{1}(\overline{B},\R^d))} \leq C_D(\omega) \label{DT}
\\
 \quad \text{ and } \,\, \| \frac{d}{dt} T(\omega) \|_{C([o,\tau], C(\overline{B}, \R^d))} \leq C_t(\omega) \quad  \text{ for a.e. } \omega. \label{tT}
\end{align} 
Since for the operator norm $\| \cdot \|$ of any square matrix $M$, it holds $\| M M^\tau \| = \| M^\tau M \| = \| M \|^2$, then from (\ref{DT}) we conclude 
\begin{equation}
\max_{t,x}\| DT_t(\omega,x) DT_t^\top(\omega,x) \| =\max_{x,t} \| DT_t^\top(\omega,x) DT_t(\omega,x) \| \leq C_D^2(\omega)  \quad \text{ for a.e. } \omega, \label{Mbound}
\end{equation}
and the analogue holds for the inverse Jacobian matrix.
Moreover, let $J_t(\omega) := \det(DT_t(\omega))$ and $J_t^{-1}(\omega) := \det(DT^{-1}_t(\omega))$. Since $J_t(\omega)$ does not vanish, $J_0(\omega)=1$, and because it is continuous, it follows that $J_t(\omega)>0$, a.e. and the same holds for its inverse.
From (\ref{DT}) we conclude 
\begin{equation}\label{Jreg}
J_{(\cdot)}(\omega), J_{(\cdot)}^{-1}(\omega) \in C^1([0,\tau], C^1(\overline{B},\R)) \quad \text{ for a.e. } \omega.
\end{equation}
The previous result implies that the gradient of the inverse Jacobian is bounded path-wise
\begin{equation}\label{ass:gradJ}
\| \nabla_x J_t^{-1} (\omega, x) \|_{\R^d} \leq C_J(\omega).
\end{equation}

\begin{remark}
Since $(M^\top)^{-1} = (M^{-1})^\top, M \in \R^{d \times d}$, inverse and transpose operations commute,  we will just write $M^\top$ for transpose and $M^{-\top}$ for its inverse.
\end{remark}

Furthermore, let $\sigma_i(\omega) = \sigma_i (DT_t(\omega,x)), i=1, \dots, d $  denote the singular values of the Jacobian matrix, i.e. the square root of eigenvalues of the matrix $DT_t DT_t^\top$ or equivalently, the matrix $DT_t^\top \, DT_t.$ If we consider a matrix which has continuous functions as entries, it follows that its eigenvalues  are also 
 continuous functions (see \cite{Zed}). This argument is based on the fact that the eigenvalues are roots of the characteristic polynomial and roots of any polynomial are continuous functions of its coefficients.  As the coefficients of the characteristic polynomial depend continuously on the entries of the matrix and singular values are the square roots of eigenvalues of $DT_t \,DT_t^\top$, it follows
 \[
 \sigma_i(\omega) \in C([0,\tau], C(\overline{B},\R)).
 \]

Thus, for every $i$, $\sigma_i(\omega)$ achieves the minimum and maximum on $[0,\tau ] \times \overline{B}$ and from (\ref{DT}) it follows
\begin{equation}
0 < \underline{\sigma}(\omega) \leq \min_{x,t} \{ \sigma_i(\omega,t,x) \}  \leq \max_{x,t} \{ \sigma_i(\omega,t,x) \} \leq \overline{\sigma}(\omega) < \infty \label{bdd:SVJ}
\end{equation}
for $ \overline{\sigma}(\omega)  := C_D(\omega)$, $\underline{\sigma}(\omega): = C_D^{-1}(\omega)$, 
for every $i=1,\dots,d$ and a.e. $\omega$.
Since $J(\omega) = \prod_{i=1}^n \sigma_i(\omega)$, the bound (\ref{bdd:SVJ}) implies 
\begin{equation}
0 < \underline{\sigma}^n(\omega) \leq J_t(\omega,x) \leq \overline{\sigma}^n(\omega) < \infty \quad \text{for every } x \in \overline{B}, t \in [0,\tau] \text{ and for a.e.} \omega. \label{bdd:J}
\end{equation}
The analogue reciprocal bounds hold for the $J_t^{-1}$. 

Note that all previous bounds are in a path-wise sense and are a direct consequence of a regularity result (\ref{T}). In the uniformly bounded case, we will in addition assume that these bounds $C_i(\omega)$ can be uniformly bounded in $\omega$. However, for the general setting we only need to assume that these constants are random variables with finite moments of desired order.
\begin{ass}\label{MbleConst}
Assume that $C_T, C_D, C_t, C_J$ belong to $L^p(\Omega)$ for every $p \in [1, \infty)$.
\end{ass}

\subsection{Unbounded transformation}\label{sec:unbbTr}

In this section we construct an example for which the Assumption \ref{MbleConst} is satisfied, but the constants are not uniformly bounded in $\omega$. This example is closely related to the exponential random field, which appears in log-normal growth models. However, since we need time and space regularity of the transformation stated in the previous subsection, we have to consider its regularization so that we can study the time derivative of the transformation $T$. Let $B_t$ be a standard $d$-dimensional Brownian motion and define its regularization by
\[
B_t^\epsilon := \int_0^t \rho _\epsilon (t-u) B_u du
\] 
where $\rho_\epsilon$ is a standard mollifier function and $C_\epsilon  := \| \rho \|_{C^1[0,\tau]}$, thus  $B_t^\epsilon$ belongs to $C^1([0,\tau], \R^d)$ .  Now we define the transformation 
\begin{equation}\label{expTr}
T_t(\omega, x) := x e^{B_t^\epsilon(\omega)} \quad x \in \mathbb{S}^1 \equiv D_0
\end{equation}
and the velocity transformation is given by $\Vel_t(\omega, x) = x e^{B_t^\epsilon(\omega)} \dot{B}_t^\epsilon(\omega)$. Hence, by definition the regularity assumptions concerning the space and time are full-filled, and thus path-wise bounds hold. We want to prove that these bounds belong to $L^p(\Omega)$, for $p \in [1,\infty)$, and that uniform bounds don't hold. 

\begin{lemma}\label{unboundedExample}
Let $T$ be defined by (\ref{expTr}), then the following holds
\begin{enumerate}
\item[i)] $\max\limits_{t \in [0,\tau],x \in D_0} \| T(\omega, t,x) \| \leq C_1(\omega) \quad $ and 
$\quad \max\limits_{t \in [0,\tau],x \in D_0 } \| DT(\omega, t,x) \| \leq C_2(\omega) $  for a.e. $\omega$
\item[ii)] $\max\limits_{t \in [0,\tau],x \in \mathbb{S}^1 } \| \frac{d}{dt} T(\omega, t,x) \| \leq C_3(\omega) $  for a.e. $\omega$.
\end{enumerate}
Moreover,  it holds $C_i \in L^p(\Omega), i=1,2,3$ for every $p  \geq 1 $ and for $i=1,2$, constants $C_i$ are not in $L^\infty(\Omega)$.
\end{lemma}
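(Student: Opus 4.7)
The plan is to reduce all three path-wise bounds to the Brownian supremum $M_\tau(\omega) := \sup_{t \in [0,\tau]}|B_t(\omega)|$ and then to invoke its Gaussian tail. From the explicit formulas $T_t(\omega,x) = x\,e^{B_t^\epsilon(\omega)}$, $DT_t(\omega,x) = e^{B_t^\epsilon(\omega)} I$ and $\frac{d}{dt}T_t(\omega,x) = x\,e^{B_t^\epsilon(\omega)}\dot{B}_t^\epsilon(\omega)$, the task reduces to controlling $B_t^\epsilon$ and $\dot{B}_t^\epsilon$ by $M_\tau$. Nonnegativity of the mollifier together with $\int \rho_\epsilon \le 1$ immediately gives $|B_t^\epsilon(\omega)| \le M_\tau(\omega)$, and differentiating under the (moving) upper limit yields
\[
\dot{B}_t^\epsilon(\omega) = \rho_\epsilon(0)B_t(\omega) + \int_0^t \rho_\epsilon'(t-u) B_u(\omega)\,du,
\]
so that $|\dot{B}_t^\epsilon(\omega)| \le K_\epsilon M_\tau(\omega)$ with $K_\epsilon := |\rho_\epsilon(0)| + \tau\|\rho_\epsilon'\|_{L^\infty}$. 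Combining, one obtains $C_1(\omega), C_2(\omega) \le e^{M_\tau(\omega)}$ and $C_3(\omega) \le K_\epsilon\, M_\tau(\omega)\,e^{M_\tau(\omega)}$.

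For the $L^p$ integrability I would invoke the Gaussian tail of the Brownian supremum: there exist $c, C > 0$ with $\mathbb{P}(M_\tau \ge r) \le C e^{-c r^2}$ for every $r \ge 0$ (reflection principle in dimension one, Fernique's theorem in higher dimension). In particular $\mathbb{E}[e^{p M_\tau}] < \infty$ for every $p \in [1,\infty)$, which gives $C_1, C_2 \in L^p(\Omega)$ at once, and a Cauchy--Schwarz split of $M_\tau \cdot e^{M_\tau}$ into two $L^{2p}$ factors puts $C_3$ in $L^p(\Omega)$ as well.

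For the failure of essential boundedness of $C_1$ and $C_2$, fix any $t_0 \in (0,\tau]$. The random variable $B_{t_0}^\epsilon$ is a continuous linear functional of the Brownian path, hence a centered Gaussian, and its variance
\[
\int_0^{t_0}\!\!\int_0^{t_0} \rho_\epsilon(t_0-u)\rho_\epsilon(t_0-v)\min(u,v)\,du\,dv
\]
is strictly positive because $\rho_\epsilon \ge 0$ is not identically zero. A non-degenerate centered Gaussian is not essentially bounded, so $\sup_{t \in [0,\tau]}|B_t^\epsilon| \ge |B_{t_0}^\epsilon|$ is not essentially bounded either; exponentiating forces $C_1 \notin L^\infty(\Omega)$, and since the bound $\|DT_t(\omega,\cdot)\| = |e^{B_t^\epsilon(\omega)}|$ is tight, the same argument gives $C_2 \notin L^\infty(\Omega)$. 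The only delicate point is the Leibniz calculation producing the boundary term $\rho_\epsilon(0)B_t$ in the first step; the remaining ingredients (Gaussian tail of $M_\tau$ and non-degeneracy of $B_{t_0}^\epsilon$) are textbook.
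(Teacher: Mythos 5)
Your proposal is correct, but it reaches the conclusion by a genuinely different route than the paper. The paper controls $\mathbb{E}\bigl[\max_t e^{pB_t^\epsilon}\bigr]$ by expanding the exponential into a power series, applying Minkowski's integral inequality term by term, inserting the explicit Gaussian absolute moments together with Stirling's formula, and closing with a ratio test; the time-derivative bound is then handled by a Cauchy--Schwarz split whose second factor is estimated via Doob's maximal inequality for $\mathbb{E}[\max_t |B_t|^{2p}]$. You instead compress everything into the single dominating random variable $M_\tau = \sup_{t\le\tau}|B_t|$, using $|B_t^\epsilon|\le M_\tau$ and $|\dot B_t^\epsilon|\le K_\epsilon M_\tau$ (the Leibniz boundary term $\rho_\epsilon(0)B_t$ you single out is exactly the one the paper also computes), and then invoke the sub-Gaussian tail of $M_\tau$ to get $\mathbb{E}[e^{pM_\tau}]<\infty$ for all finite $p$ in one stroke. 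This buys a shorter and more robust argument that sidesteps the series manipulations (which in the paper are the most delicate and typo-prone part), at the cost of importing the reflection-principle tail bound as a black box. For the failure of essential boundedness the two arguments also differ: the paper shows the $L^p$-norms blow up as $p\to\infty$ via the exact formula $\mathbb{E}[e^{pB_\tau^\epsilon}]=e^{\sigma^2p^2/2}$, while you observe directly that $\max_t e^{B_t^\epsilon}\ge e^{B_{t_0}^\epsilon}$ with $B_{t_0}^\epsilon$ a non-degenerate centred Gaussian, hence essentially unbounded; both hinge on the same non-degeneracy $\operatorname{Var}(B_{t_0}^\epsilon)>0$, which you justify and the paper tacitly assumes. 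Your treatment of $C_3$ (only $L^p$ membership, no claim about $L^\infty$) matches the scope of the paper's statement.
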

\begin{proof}
As already commented, the existence of the constants follows directly from the regularity of the fields, i.e. their continuity on compact sets $[0,\tau]$ and $\mathbb{S}^1$. The main part is to prove the regularity of constants. We choose norm on $D_0$ s.t. $\max_{x \in D_0} \| T_t(\omega, x) \| = e^{B_t^\epsilon(\omega)}$ and we bound its $p-$moments.   
Utilizing the Minkowski integral inequality we arrive at
\begin{align}
C_T(\omega) := \mathbb{E} [\max\limits_{t \in [0,\tau]} e^{B_t^\epsilon}] &= \mathbb{E} [ \max_{t \in [0,\tau]} \sum_{n \geq 0} \frac{1}{n!} (p B_t^\epsilon)^n]  \nonumber \\
&\leq \sum_{n \geq 0} \frac{p^n C_\epsilon^n}{n!} \mathbb{E} [\int_0^\tau |B_u | du] ^n 
\leq \sum_{n \geq 0} \frac{p^n C_\epsilon^n}{n!}( \int_0^s (\mathbb{E}[ |B_u|^n])^{1/n} ds )^n.  \label{lastTerm}
\end{align}
In order to bound the last term in (\ref{lastTerm}) we exploit the formula for raw absolute moments of a random variable and Stirling's formula, which yields to
\begin{equation}
\mathbb{E} [\max_{t \in [0,\tau]} e^{B_t^\epsilon}] \leq \sum_{n \geq 0} \frac{p^n C_\epsilon^n}{n!} \frac{1}{2^{3n}} C (n-1)^{n/2} e^{(1-n)/2} \tau^{2n} = \frac{c}{\sqrt{e}} \sum_{n \geq 0} C_{p,\epsilon}^n \frac{(n-1)^{n/2}}{2^{3n}n!} 
\end{equation}
where $C_{p, \epsilon} := \frac{pC_\epsilon}{\sqrt{e}} \tau^2$. Ration test implies the absolute convergence of the last series, and thus we showed that $C_T$ is bounded by a convergent series, it is also measurable as a maximum of a measurable function, and hence $C_T \in L^p(\Omega)$, for any $p\geq 1$.

For a lower bound, we use the formula for the expectation of exponential of a Gaussian random variable $p B_\tau^\epsilon$
\[
\mathbb{E} [ \max_{t\in [0,\tau],x\in D_0} \| T_t(\omega, x)] \geq \mathbb{E} e^{p B_\tau^\epsilon } = e^{\frac{\sigma^2 p^2}{2} }.
\]
Hence, 
\[
0 < \sqrt[p] {e^{\frac{\sigma^2 p^2}{2} }} \leq  \max_{t\in [0,\tau],x\in D_0} \| T_t( x) \|_{L^p(\Omega)} \leq C_T < \infty,
\]
i.e. $C_1 \in L^p(\Omega)$, for any $p \in [1, \infty)$, but as $p$ tends to infinity, the left hand side also tends to infinity, thus $C_1$ does not belong  to $L^\infty$, which completes the proof of i). Since $DT(\omega, t,x) = e^{B_t^\epsilon(\omega)} \text{Id}$, the second statement ii) follows in the same way as the previous one.

The first step in proving iii) is to apply the Cauchy-Schwartz inequality which implies
\begin{equation}\label{dtBound}
\mathbb{E} \left(  \max_{t\in [0,\tau],x\in D_0} \frac{d}{dt} \| T(t,x) \| \right)^p \leq \sqrt{\mathbb{E} (\max_{t \in [0,\tau]} e^{2pB_t^\epsilon})} \sqrt{\mathbb{E} (\max_{t \in [0,\tau]} |\dot{B}_t^\epsilon|^{2p}) }.
\end{equation}
For the first square root in (\ref{dtBound}) we use the same bound from i). For the second square root, we utilize again the bounds for the raw bounds of the moments of a random variable. Since
\[
\dot{B}_t^\epsilon = B_t \rho(0) + \int_0^t \rho'_\epsilon(t-u) B_u du,
\]
the binomial formula implies
\[
\mathbb{E} (\max_{t \in [0,\tau]} |\dot{B}_t^\epsilon|^{2p}) \leq 2^{p-1}  C_\epsilon^p \left(\mathbb{E}[\max_{t \in [0,\tau]} |B_t|^{2p}] + \mathbb{E} [\int_0^\tau |B_u|^{2p} du] \right),
\]
for simplicity of computations, we prove first for the even p, i.e. we use $2p$, and the analogue holds for $2p+1$. 
To bound the first term in the last sum, we exploit the Doob's inequality and arrive at
\[
\mathbb{E}[\max_t |B_t|^{2p}] \leq \left( \frac{2p}{2p-1} \right)^{2p} 2^p \tau^{2p} \frac{\Gamma(\frac{2p+1}{2})}{\sqrt{\pi}} =: C(p) \in L^p(\Omega)
\] 
for foxed $p \in [1,\infty)$. For the second term in the sum it holds
\[
 \mathbb{E} [\int_0^\tau |B_u|^{2p} du] = \int_0^\tau e^{2p^2u^2}du \leq \frac{\tau}{2p^2} e^{\frac{\tau}{p\sqrt{2}}} .
\]
Hence, we proved
\[
\| \max_{t \in [0,\tau]} |\dot{B}_t^\epsilon| \|_{L^{2p}(\Omega)} \leq C(p)^{1/2p} 2^{\frac{2p-1}{2p}} \tilde{C}_\epsilon^{1/2p} (C(p) + \frac{\tau}{2p^2} e^{\frac{\tau}{p\sqrt{2}}})^{1/(2p)}<\infty
\]
for fixed $p$. All together we have
\[
0 \leq \max_{t\in [0,\tau],x\in \mathbb{S}^1} \left\| \frac{d}{dt} T(t,x,\omega) \right\| \leq C_3(\omega) < \infty \quad \forall p <\infty
\]
where $C_3 \in L^p(\Omega)$. Note however, that we do not have the below bound away from zero and we do not prove that we do not have uniform bound for the $C_3$. Since we do not need these results for our example, and they are not straightforward,  we will not address this question in this work.  
\end{proof}
Since $J_t =  e^{B_t^\epsilon}\text{Id}$, it follows that $\nabla_x J_t^{-1} = 0$, thus $C_J = 0.$
From Lemma \ref{unboundedExample}, it follows that we have constructed the transformation $T_t$ that does satisfy the required regularity assumptions and all the bounds are random variables, however, this random field is not uniformly bounded in $\omega$ and thus does not fit into the setting of Subsection \ref{sec:UnifBdd} and is considered separately in the Subsection \ref{sec:lognormal}.  

\section{Heat equation on a random cylindrical domain}

As a standard example of a linear parabolic PDE, we consider the following initial boundary value problem for the heat equation in the non-cylindrical domain $Q(\omega)$
\begin{equation}\label{heateq}
\begin{aligned}
u' - \Delta u &= f \quad \text{in } Q(\omega) \\
u &= 0 \quad \text{ on} \cup_{t \in (0,\tau)} \partial D_t(\omega) \times \{ t\} \\
u(\omega,x, 0) &= u_0(x,\omega) \quad x \in D_0.
\end{aligned}
\end{equation}
We assume that the initial domain $D_0$ is deterministic and $u'$ is a weak time derivative.
\begin{remark}
The general form of the point-wise conservation law on an evolving flat domain $D_t$, derived in \cite{DE07}, is given by
\[
\partial^\bullet u + u \nabla \cdot \Vel + \nabla \cdot q =0
\] 
where $\Vel$ is the velocity of the evolution, $q$ is the flux and $\partial^\bullet$ is the material derivative. Taking in particular $q = -\nabla u - \Vel u$, we obtain the form (\ref{heateq}). Thus, although the material derivative does not explicitly appear in the formulation of the equation, as we have already commented, the material derivative is a natural notion for the derivative of a function defined on a moving domain. Thus, for the solution $u$, we will ask that its material derivative is in the appropriate space and we will use the solution space introduced in Section  \ref{sec:solution_space}.
\end{remark}

As already announced, in order to treat the path-wise formulation on a random tube, we  define
\begin{equation}\label{path_spaces}
V(\omega,t) := H_0^1(D_t(\omega)) \qquad H(\omega,t):=  L^2(D_t(\omega)), \qquad \forall \omega \in \Omega, \, \forall t \in (0,\tau).
\end{equation}
Given random velocity filed $\Vel$ induces a random flow $T_\Vel$ and for every $\omega \in \Omega$ and by  (\ref{path_flow}) we  define the path-wise pullback operator 
$\phi_{-t}(\omega):  L^2(D_t(\omega)) \rightarrow L^2(D_0)$.

By Lemma \ref{compatibility},  the spaces $L^2_{V(\omega)}$, $L^2_{{V^*(\omega)}}$ and $L^2_{H(\omega)}$ are well-defined for every $\omega$. Moreover, identifying  $L^2_{{V^*(\omega)}}$ and $(L_{V(\omega)}^2)^*$ and exploiting the density of the space $L^2(0,\tau;{V(\omega)})$ in $L^2(0,\tau;{H(\omega)})$, Lemma \ref{isom_with_Boh_sp},  we obtain the following result.

\begin{lemma}
\begin{equation*}
L^2_{H_0^1(D_t(\omega))} \xhookrightarrow{} L^2_{L^2(D_t(\omega))} \xhookrightarrow{} L^2_{H^{-1}(D_t(\omega))}
\end{equation*}
is a Gelfand triple, for every $\omega \in \Omega$.
\end{lemma}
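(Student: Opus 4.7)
The plan is to fix $\omega\in\Omega$ and reduce the statement to the corresponding, well-known Gelfand triple on the fixed reference domain $D_0$ via the isomorphisms of Lemma \ref{isom_with_Boh_sp}. Concretely, the classical triple $H^1_0(D_0)\hookrightarrow L^2(D_0)\hookrightarrow H^{-1}(D_0)$ induces, componentwise in time, the Sobolev--Bochner Gelfand triple
\[
L^2(0,\tau;H^1_0(D_0))\hookrightarrow L^2(0,\tau;L^2(D_0))\hookrightarrow L^2(0,\tau;H^{-1}(D_0)),
\]
with dense, continuous embeddings and the standard $L^2$-pivot identification. The idea is to transport this picture to the evolving spaces by the push-forward $\phi_{(\cdot)}$.

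First I would verify the two continuous embeddings. By Lemma \ref{compatibility} the pairs $(H(\omega),\phi_t)$ and $(V(\omega),\phi_t|_{V_0})$ are compatible, so Lemma \ref{isom_with_Boh_sp} yields that $u\mapsto \phi_{(\cdot)}u(\cdot)$ is an isomorphism from $L^2(0,\tau;V_0)$ onto $L^2_{V(\omega)}$ (and similarly for $H(\omega)$ and $V^*(\omega)$), with two-sided bounds by $C_X(\omega)$. Combining these isomorphisms with the continuous embedding $L^2(0,\tau;V_0)\hookrightarrow L^2(0,\tau;H_0)$ (which is just the pointwise-in-$t$ embedding $H^1_0(D_0)\hookrightarrow L^2(D_0)$ integrated against time) gives $L^2_{V(\omega)}\hookrightarrow L^2_{H(\omega)}$ with a constant depending on $\omega$ through $C_X(\omega)$. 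The analogous argument gives $L^2_{H(\omega)}\hookrightarrow L^2_{V^*(\omega)}$.

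Next I would check density. It suffices to show that $L^2(0,\tau;V_0)$ is dense in $L^2(0,\tau;H_0)$, which is classical, and then observe that because $\phi_{(\cdot)}$ is an isomorphism between these spaces and the corresponding $L^2_X$ spaces, it sends a dense sequence to a dense sequence; hence $L^2_{V(\omega)}$ is dense in $L^2_{H(\omega)}$, and likewise $L^2_{H(\omega)}$ is dense in $L^2_{V^*(\omega)}$.

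The main thing to make sure is the pivot identification: that $L^2_{H(\omega)}$ embeds into $L^2_{V^*(\omega)}\cong (L^2_{V(\omega)})^*$ through the inner product of $L^2_{H(\omega)}$. Using the duality pairing recalled just before the lemma, $\langle f,u\rangle_{L^2_{V^*(\omega)},L^2_{V(\omega)}}=\int_0^\tau \langle f(t),u(t)\rangle_{V^*(t),V(t)}\,dt$, and the pointwise pivot identification $\langle f(t),u(t)\rangle_{V^*(t),V(t)}=(f(t),u(t))_{H(t)}$ whenever $f(t)\in H(t)$, this just says
\[
\langle f,u\rangle_{L^2_{V^*(\omega)},L^2_{V(\omega)}} = \int_0^\tau (f(t),u(t))_{H(t)}\,dt = (f,u)_{L^2_{H(\omega)}}
\]
for $f\in L^2_{H(\omega)}$ and $u\in L^2_{V(\omega)}$, so the embedding $L^2_{H(\omega)}\hookrightarrow L^2_{V^*(\omega)}$ is given by the Riesz map of $L^2_{H(\omega)}$, as required for a Gelfand triple. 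I expect the only delicate point to be this compatibility of pairings: one must check that the integrand above is integrable (which is exactly \cite[Lemma 2.13]{EAS}, already cited in the preceding section) and that on the dense subset where both pairings make sense they coincide; the rest is then a transport of well-known facts across isomorphisms.
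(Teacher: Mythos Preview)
Your proposal is correct and follows essentially the same route as the paper: the paper's argument (given in the sentence preceding the lemma rather than in a separate proof) also invokes Lemma~\ref{compatibility} to ensure the spaces are well defined, identifies $L^2_{V^*(\omega)}$ with $(L^2_{V(\omega)})^*$, uses the density of $L^2(0,\tau;V_0)$ in $L^2(0,\tau;H_0)$, and transports everything via Lemma~\ref{isom_with_Boh_sp}. Your write-up is simply more explicit, in particular on the pivot identification, which the paper leaves implicit.
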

Assuming enough regularity for the initial data $f$ and $u_0$, we specify the weak path-wise formulation of the boundary value problem (\ref{heateq}).
The path-wise solution space is a special case of the space (\ref{subspace}) and is defined by 
\begin{equation}\label{def:path_reg_sol_space}
W(H_0^1(D_t(\omega), L^2(D_t(\omega)))  := \{ u \in L^2_{H_0^1(D_t(\omega)} \mid \partial^\bullet{u} \in L^2_{ L^2(D_t(\omega))} \} \quad \forall \omega \in \Omega.
\end{equation}
Note that it is possible to consider less regular setting, as we will do in the final result, and then consider the solution space $W(V(\omega, t), V^*(\omega, t))$.

\begin{problem}[Weak path-wise form of the heat equation  on $D_t(\omega)$] \label{prob:RWF1} $\qquad \qquad \qquad \qquad \qquad \qquad$
For any $\omega$, find $u(\omega) \in W(H_0^1(D_t(\omega)),L^2(D_t(\omega)))$ that point-wise a.e. satisfies the initial condition $u(0)=u_0 \in L^2(\Omega,H^1(D_0))$ and  
\begin{equation} \label{RDweak form}
 \int_{D_t(\omega)} \!\! \left( u'(\omega, t) \varphi + \left< \nabla u(\omega, t) , \nabla \varphi \right>_{\R^n}\right)= \int_{D_t(\omega)} f(\omega,t) \varphi 
 \end{equation}
for every $\varphi \in  H_0^1(D_t(\omega))$ and a.e.  $t \in [0,\tau]$.
\end{problem}

Since (\ref{heateq}) is posed on a random domain, we would like to show that the solution $u$ is also a random variable and that it has finite moments. However, since the domain is random, we have $u(\omega, t) \in D_t(\omega)$. Thus defining the expectation of $u$ or of the random domain is not straightforward. The notion of a stochastic process with a random domain has already been analysed (see \cite{DF} and references therein). The authors begin by defining what is meant by a random open convex set in a probabilistic setting  and then continue by explaining what a stochastic process with a random domain is. Moreover, in \cite{CKR}, the authors give a possible interpretation of the notions of noise and a random solution on time-varying domains. We believe that these ideas could be generalized to our setting, but  they will not be analysed in this work. Instead, motivated by the domain mapping method, we consider the pull-back of the problem (\ref{heateq}) on the fixed domain $D_0$ and study the solution $\hat{u}$ of the reformulated problem. We  first derive the  path-wise formulation for the function $\hat{u}$ that is equivalent to Problem \ref{prob:RWF1}. For the function $\hat{u}$ it makes sense to ask $\hat{u} \in \mathcal{W}(H_0^1(D_0), L^2(D_0))$ and it is clear what its expectation or other quantity of interest, are. Thus, exploiting the domain mapping method, we translate the PDE on the random non-cylindrical domain into a PDE with random coefficients on the fixed cylindrical domain $\hat{Q}:= D_0 \times [0,\tau]$. 

Let $\hat{u}(\omega): [0,\tau ] \times D_0 \to \R$ be defined by the plain pull-back transformation
\begin{equation}\label{transf}
\hat{u}(\omega,t,y) := u(\omega, t, T_t(\omega, y) ) \quad \text{for every } y \in D_0, t \in [0,\tau].
\end{equation}

\begin{lemma}[Formulae for transformed $\nabla$ and $\partial_t$]\label{CR}
Let $f(\omega)\! \in \!L^2_{V(\omega,t)} $ and $\hat{f}(\omega) : \hat{Q} \to \R$,  $\hat{f}(\omega, t, X) := f(\omega\!, t, T_t(\omega,  X))$, for every $\omega \in \Omega$. Then
\begin{align}
 \nabla_x f (\omega, t, T_t(\omega, y)) &= \, DT_t^{-\top}(\omega,y) \, \nabla_y \hat{f}(\omega,t,y)  \quad y \in D_0\label{gradCR}\\
f'(\omega, t, T_t(\omega,y)) &= \hat{f}'(\omega,t,y) -   V(\omega,t, T_t(y))    (DT_t^{-\top}(\omega,t,y)\nabla_y \hat{f}(\omega,t,y)) \quad y \in D_0.  \label{timeCR}
\end{align}
\end{lemma}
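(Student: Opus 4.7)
The plan is to obtain both identities from a direct application of the chain rule to the composition $\hat{f}(\omega,t,y) = f(\omega,t,T_t(\omega,y))$, performed path-wise. Throughout I fix $\omega \in \Omega$ and suppress it, invoking the regularity $T,T^{-1} \in C^1([0,\tau],C(\overline{B},\R^d)) \cap C([0,\tau],C^2(\overline{B},\R^d))$ from (\ref{T}) together with the invertibility bounds (\ref{DT}).

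For the spatial identity, I would first assume $f(t,\cdot)$ is continuously differentiable on $D_t$. The classical chain rule applied to $\hat{f}(t,y)=f(t,T_t(y))$ in the $y$ variable gives
\begin{equation*}
\nabla_y \hat{f}(t,y) \;=\; DT_t(y)^\top \, \nabla_x f(t,T_t(y)),
\end{equation*}
and multiplying both sides by $DT_t^{-\top}(y)$, which exists pointwise by (\ref{DT}), produces (\ref{gradCR}). The identity will then extend by density to $f(t,\cdot) \in H^1(D_t)$: approximating $f$ by smooth functions $f_n$ in $H^1(D_t)$ and pulling back via $T_t$, the bounds on $DT_t$ and $J_t$ from (\ref{DT}), (\ref{bdd:J}) give convergence of both sides in $L^2(D_0;\R^d)$ along the approximation.

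For the time identity, I would fix $y \in D_0$ and differentiate the scalar map $t \mapsto \hat{f}(t,y) = f(t,T_t(y))$, again by the chain rule,
\begin{equation*}
\hat{f}'(t,y) \;=\; f'(t,T_t(y)) \,+\, \nabla_x f(t,T_t(y)) \cdot \partial_t T_t(y).
\end{equation*}
The defining ODE for the flow (\ref{flow_rel}) gives $\partial_t T_t(y) = V(t,T_t(y))$, so substituting the spatial identity $\nabla_x f(t,T_t(y)) = DT_t^{-\top}(y)\,\nabla_y \hat{f}(t,y)$ into the gradient term and solving for $f'(t,T_t(y))$ yields (\ref{timeCR}).

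The main subtlety is that, since $f(\omega) \in L^2_{V(\omega,t)}$, the spatial gradient $\nabla_x f$ is only a weak gradient and the time derivative $f'$ must likewise be interpreted in the weak sense indicated by the space in which it lives. Both steps above survive this generalisation because $T_t$ is a $C^2$-diffeomorphism with path-wise controlled derivatives via (\ref{DT})--(\ref{tT}): the standard chain rule for Sobolev compositions applies, the pull-back preserves $H^1$-regularity by Lemma \ref{isom_with_Boh_sp}, and the same bounds furnish the estimates needed to pass to the limit in smooth approximations. I expect the hardest bookkeeping step to be the time formula, as one must verify that $\hat{f}$ inherits a weak time derivative from $f$ and that the cross term $V(t,T_t(y)) \cdot (DT_t^{-\top}\nabla_y \hat{f})$ indeed belongs to the same space as $\hat{f}'$, but the uniform bounds on $V$ from Assumption \ref{ass:RV} and on $DT_t^{-\top}$ from (\ref{DT}) reduce this to a routine product estimate.
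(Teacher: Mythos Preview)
Your proposal is correct and follows essentially the same approach as the paper: both arguments fix $\omega$, apply the chain rule for the spatial formula, and then combine the chain rule with the flow relation (\ref{flow_rel}) and the already-obtained gradient identity to derive the time formula. The only minor difference is presentational---the paper dispatches the Sobolev chain rule by a direct citation (Brezis, Proposition IX.6) rather than the smooth-approximation-plus-density argument you sketch---so your extra care with the weak-derivative bookkeeping is not misplaced, just more explicit than what the paper writes out.
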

\begin{proof}
We do the proof in a path-wise regime. The identity (\ref{gradCR}) follows directly from the chain rule (see \cite[Proposition IX.6]{sepBrezis}) and definition (\ref{transf}).
Utilizing  once more  the chain rule for the derivative w.r.t. time,  the  relation (\ref{gradCR}), and (\ref{flow_rel}), we get
\begin{eqnarray*}
\hat{f}'(t,y) &=& f'(t,T_t(y)) + \, DT_t^{-\top}(t,y)\nabla_y \hat{f}(\omega,t,y)) \cdot \frac{\partial T}{\partial t}(t,y) \\
&=& f'(t,T_t(y)) + \, (\,DT_t^{-\top}(\omega,t,y)\nabla_y \hat{f}(\omega,t,y))\cdot  V(t,T_t(y))
\end{eqnarray*}
which implies the relation (\ref{timeCR}). 
\end{proof}

\begin{problem}[Weak path-wise form of the heat equation on $D_0$] \label{prob:RWF2}
 For every $\omega$, find  $\hat{u}(\omega) \in W(H_0^1(D_0), L^2(D_0))$ that point-wise a.e. satisfies the initial condition $u(0)=u_0$, $u_0 \in L^2(\Omega,H^1(D_0))$ and  
\begin{multline} \label{weak form on D}
 \int_{D_0} \ \left(\hat{u}'(t,y) - \left< \,DT_t^{-\top}(t,y) \nabla \hat{u}(t,y), V(t,T_t(y))\right>_{\R^d} \right)J_t(y) \hat{\varphi}(y) \\
 + \left<A(t, y) \nabla \hat{u}(t,y), \nabla \hat{\varphi}(y) \right>_{\R^d} dy = 
 \int_{D_0}  \hat{f} (t,y)\hat{\varphi}(y) J_t(y) dy
 \end{multline}

for every $\hat{\varphi} \in  H_0^1(D_0)$ and a.e.  $t \in [0,\tau]$,
where
\begin{equation}\label{matrixA}
A(\omega,t,y ) = J_t(\omega,y) D T_t^{-1}(\omega,y) \, D T_t^\top(\omega,y)^{-1} \quad y \in D_0.
\end{equation}
\end{problem}

\begin{lemma}[Path-wise formulations on $Q_T(\omega)$ and $\hat{Q}_T$] 

Letting $f \in L^2_{L^2(\Omega, L^2( D_t(\omega)))}$,  the following are equivalent:

i) $u(\omega)$ is a path-wise weak solution to Problem \ref{prob:RWF1}

ii) $\hat{u}(\omega)$ is a path-wise weak solution to Problem \ref{prob:RWF2}.
\end{lemma}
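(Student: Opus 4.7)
My plan is to reduce both implications to a change of variables in the integrals of Problem \ref{prob:RWF1}, combined with the substitutions supplied by Lemma \ref{CR}. Since $T_t(\omega)$ is a $C^2$-diffeomorphism by (\ref{T}) that maps $\overline{D}_0$ onto $\overline{D_t(\omega)}$, and in particular $\partial D_0$ onto $\partial D_t(\omega)$, the pullback $\varphi \mapsto \varphi \circ T_t(\omega)$ is a linear homeomorphism $H_0^1(D_t(\omega)) \to H_0^1(D_0)$. Thus the universal quantifier ``for every $\varphi \in H_0^1(D_t(\omega))$'' in (\ref{RDweak form}) may be identified with ``for every $\hat{\varphi} \in H_0^1(D_0)$'' in (\ref{weak form on D}), provided the two integrands match under the substitution.

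Next I would perform this substitution pointwise in $t$. Using $x = T_t(\omega, y)$, $dx = J_t(\omega, y)\, dy$, and applying (\ref{gradCR}) to both $u$ and $\varphi$ gives
\begin{equation*}
\langle \nabla_x u, \nabla_x \varphi \rangle_{\R^d} = \langle DT_t^{-\top} \nabla_y \hat{u},\, DT_t^{-\top} \nabla_y \hat{\varphi} \rangle_{\R^d} = \langle DT_t^{-1} DT_t^{-\top} \nabla_y \hat{u},\, \nabla_y \hat{\varphi} \rangle_{\R^d},
\end{equation*}
so that after multiplication by the Jacobian $J_t$ one recovers exactly the elliptic term with the matrix $A$ defined in (\ref{matrixA}). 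Applying (\ref{timeCR}) to $u'$ produces the first summand in (\ref{weak form on D}) after the Jacobian factor is inserted, and the right-hand side transforms by a direct change of variables. Each identity is an equality with invertible coefficients, so the converse direction is obtained by reading the same manipulation backwards via the inverse flow $T_t^{-1}(\omega)$, whose regularity is also controlled by (\ref{T}). The initial condition correspondence is immediate because $T_0 = \mathrm{Id}$, and the matching of the solution spaces $W(H_0^1(D_t(\omega)), L^2(D_t(\omega)))$ and $\mathcal{W}(H_0^1(D_0), L^2(D_0))$ is the evolving-space equivalence cited after (\ref{subspace}).

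The only subtle point concerns the interpretation of the time derivative. The solution space (\ref{def:path_reg_sol_space}) controls the material derivative $\partial^\bullet u$, whereas (\ref{RDweak form}) is written with $u'$, the partial derivative at fixed $x$. The pointwise identity (\ref{timeCR}) must therefore be promoted to the weak setting: because $\hat{u} = \phi_{-(\cdot)} u$ lies in the standard Sobolev--Bochner space $\mathcal{W}(H_0^1(D_0), L^2(D_0))$ by the evolving-space equivalence, the weak derivative $\hat{u}'$ exists in $L^2(0,\tau; L^2(D_0))$, and the chain-rule identity relating $\hat{u}'$, $\partial^\bullet u$, and the velocity term propagates by testing against $\eta$ from the dense class $\mathcal{D}_\Vel(0,\tau)$ and integrating by parts on the fixed domain $D_0$. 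Verifying that Lemma \ref{CR}, proved classically, descends to these weak classes is the principal technical step; once this is granted, the remainder of the argument is routine bookkeeping in the change of variables.
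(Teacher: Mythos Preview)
Your approach is essentially the same as the paper's: both arguments hinge on the substitution $x=T_t(\omega,y)$ together with the chain-rule identities of Lemma \ref{CR} to match each term of (\ref{RDweak form}) with the corresponding term of (\ref{weak form on D}), and then invoke symmetry for the converse. The paper's proof is in fact terser than yours---it carries out the formal change of variables without commenting on the equivalence of test-function classes, the initial condition, or the solution-space correspondence---so your additional bookkeeping and your remark about promoting (\ref{timeCR}) from a pointwise identity to the weak setting go beyond what the paper supplies, but do not deviate from its route.
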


\begin{proof}
Let us assume that i) holds. From the substitution rule $x = T_t(y)$ and Lemma \ref{CR}, we obtain
\begin{eqnarray*}
\int_{D_0}u'(t,T_t(y)) \varphi(t,T_t(y)) J_t(y) dy + \int_{D_0} \nabla u (t,T_t(y)) \cdot \nabla \varphi(t,T_t(y)) J_t(y) dy = \nonumber \\
\int_{D_0} \left( \hat{u}' (t,y) - \,DT_t^{-\top}(t,y) \nabla \hat{u}(t,y) \cdot V(t,T_t(y)) \right) \hat{\varphi}(t,y) J_t(y) dy + \nonumber \\
\int_{D_0} \left< \,  DT_t^{-\top}(y)\nabla \hat{u}(t,y) , DT_t^{-\top}(y) \nabla \hat{\varphi}(t,y)\right>_{\R^d} J_t(y)dy =  \label{chainrule} \\
\int_{D_0} \hat{f}(t,y)\hat{ \varphi}(t,y) J_t(y) dy. \nonumber
\end{eqnarray*}

Since
\[
\int_{D_0}\!\! \left< \,  DT_t^{-\top}(y)\nabla \hat{u}(t,y) , DT_t^{-\top}(y) \nabla \hat{\varphi}(t,y)\right>_{\R^d} J_t(y)dy = 
\int_{D_0}  \!\!\!\left<A(t, y) \nabla \hat{u}(t,y), \nabla \hat{\varphi}(y) \right>_{\R^d} dy, 
\]
where the matrix $A$ is defined by (\ref{matrixA}), it follows that that $\hat{u}$ is a path-wise weak solution of Problem \ref{prob:RWF2}. 
The proof of implication ii) $\Rightarrow$ i) is similar.
\end{proof}
Note that according to Lemma \ref{isom_with_Boh_sp}, it holds
\[
u(\omega) \in L^2_{H_0^1(D_t(\omega))} \Leftrightarrow \hat{u}(\omega) \in L^2(0,\tau; H_0^1(D_0)) \quad \text{ for a.e. } \omega.
\]

For the mean-weak formulation on the fixed domain we consider
\begin{equation}\label{our_spaces}
\mathcal{V} := L^2(\Omega, H_0^1(D_0)) \qquad \mathcal{H}:= L^2(\Omega, L^2(D_0)).
\end{equation}
Utilizing the tensor structure  (\ref{tensor}) of such defined $\mathcal{V}$ and $\mathcal{H}$, and the density argument, we obtain that $\mathcal{V} \xhookrightarrow[i]{} \mathcal{H} \cong \mathcal{H}^*\xhookrightarrow[i']{} \mathcal{V}^*$ is a Gelfand triple. 

\begin{remark}
The spaces $H_0^1(D_0)$ and $H_0^1(D_t(\omega))$ are isomorphic due to the isomorphism $\eta \mapsto \eta \circ T_t(\omega)^{-1}$. This implies that the space of test functions in the weak formulation is independent of $\omega$. For more details see \cite[Lemma 2.2]{HPS}.
\end{remark}

\subsection{Uniformly bounded transformation - well-posedness of  the transformed equation}\label{sec:UnifBdd}

In this subsection we make additional assumptions concerning uniformity in $\omega$ for   the bounds $C_j(\omega)$ of the transformation $T$ and its derivatives. We  use the same notation for uniform constants as we did for the random variables in Section \ref{sec:randomTubes}.  To ensure the uniform bound and the coercivity of the bilinear form that will be considered, we suppose to have the uniform bound of the norm.

\begin{ass}\label{UUC}
We assume that there exists a constant $C_T>0$ such that 
\[
\|T(\omega)\|_{C([0,\tau], C^2(\overline{B},\R^d))}, \|T^{-1}(\omega)\|_{C([0,\tau], C^2(\overline{B},\R^d))} \leq C_T \quad \text{ for a.e. } \omega.
\]
\end{ass}

In addition, we  assume a uniform bound for the Jacobian matrices, time derivative and  gradient of the inverse Jacobian. The regularity result (\ref{Jreg}) implies they are bounded, but not that these bounds are uniform in $\omega$. 
\begin{ass}\label{UBAss}
There exist constants $C_D, C_t, C_J > 0$ s.t.
\begin{align}
\|DT(\omega)\|_{C([0,\tau], C^{1}(\overline{B},\R^d))}, \|DT^{-1}(\omega)\|_{C([0,\tau], C^{1}(\overline{B},\R^d))} &\leq C_D \label{UDT}
\\
 \quad  \| \frac{d}{dt} T(\omega) \|_{C([o,\tau], C(\overline{B}, \R^d))} &\leq C_t \label{UtT} \\
 \| \nabla_x J_t^{-1} (\omega, x) \|_{\R^d} &\leq C_J.
 \quad  \text{ for a.e. } \omega. \label{Uass:gradJ}
\end{align} 
\end{ass}

The independence on $\omega$ of minimal and maximal values of  $\sigma_i(\omega)$, for any $i$ follows from (\ref{DT}). To see this, recall that the Rayleigh quotient
and the definition of the singular value imply
\[
\sigma_i(\omega, x, t) \leq \max_{\|y \|_{\R^d}=1} \| DT_t(\omega,x) y \|_{\R^d}.
\]

Thus, for $ \overline{\sigma}  := C_D$, $\underline{\sigma}: = C_D^{-1}$, 
 every $i=1,\dots,d$ and a.e. $\omega$  we have 

\begin{equation}
0 < \underline{\sigma} \leq \min_{x,t} \{ \sigma_i(\omega,t,x) \}  \leq \max_{x,t} \{ \sigma_i(\omega,t,x) \} \leq \overline{\sigma} < \infty. \label{Ubdd:SVJ}
\end{equation}
Since $J(\omega) = \prod_{i=1}^n \sigma_i(\omega)$, the bound (\ref{bdd:SVJ}) implies the uniform bound for the determinant of the Jacobian, i.e. for a.e. $\omega$ it holds
\begin{equation}
0 < \underline{\sigma}^n \leq J_t(\omega,x) \leq \overline{\sigma}^n < \infty \quad \text{for every } x \in \overline{B}, t \in [0,\tau]. \label{Ubdd:J}
\end{equation}
The analogue reciprocal bounds hold for the $J_t^{-1}$. 

After stating all the assumptions, we want to write  (\ref{weak form on D}) in a standard form, which is more convenient  to apply the general theory of well-posedness for parabolic PDEs presented in \cite{Wloka}, i.e. we remove the weight $J_t^{-1}$ in front of the time derivative $\hat{u}'$. This form we can achieve by testing the equation  (\ref{weak form on D}) with functions $\hat{\varphi}(t,y) = J_t^{-1}(y)\tilde{\varphi}(t,y)$. The spatial regularity of $J_t$ stated in  (\ref{Jreg}), implies 
$$\forall \hat{\varphi} \in H_0^1(D_0) \Leftrightarrow \forall \tilde{\varphi} \in H_0^1(D_0).$$
In this way we obtain the equivalent form of (\ref{weak form on D}) given by
\begin{multline}
 \int_{D_0} \ \left(\hat{u}'(t,y) - \left< \,DT_t^{-\top}(t,y) \nabla \hat{u}(t,y), V(t,T_t(y))\right>_{\R^d} \right)\tilde{\varphi}(y) \\
 + \left<A(t, y) \nabla \hat{u}(t,y), \nabla (J_t^{-1}(y)\tilde{\varphi}(y)) \right>_{\R^d} dy = 
 \int_{D_0}  \hat{f} (t,y)\hat{\varphi}(y) dy,
 \end{multline}
for all   $\tilde{\varphi} \in H_0^1(D_0)$. Utilizing the product rule for the gradient and symmetry of the matrix $A$, we arrive at the equivalent weak path-wise form of the heat equation:

\begin{problem}[Weak path-wise form of the heat equation  on $D_0$] \label{newprob:RWF3}
 For every $\omega$, find  $\hat{u}(\omega) \in W(H_0^1(D_0), L^2(D_0))$ that point-wise a.e. satisfies the initial condition $u(0)=u_0$, $u_0 \in L^2(\Omega,H_0^1(D_0))$ and  
\begin{multline} \label{new weak form on D}
 \int_{D_0} \ \left(\hat{u}'(t,y) + \left< A(t, y)  \nabla J_t^{-1}(y) - DT_t^{-1}(y) V(t,T_t(y)), \nabla \hat{u}(t,y) \right>_{\R^d} \right)  \tilde{\varphi}(y) \\
 + \left< DT_t^{-1}(y) DT_t^{-\top}(t,y) \nabla \tilde{\varphi} (t,y) ,  \nabla \hat{u}(t,y)\right>_{\R^d} dy =
 \int_{D_0}  \hat{f} (t,y) \tilde{\varphi}(y)  dy
 \end{multline}
for every $\tilde{\varphi} \in  H_0^1(D_0)$ and a.e.  $t \in [0,\tau]$.
\end{problem}
Observe that the partial integration and the fact  that a test function vanishes on the boundary $\partial D_0$ imply
\begin{multline*}
\int_{D_0} \left< \, DT_t^{-1}(y)  DT_t^{-\top}(y)\nabla \tilde{u}(t,y) , \nabla \hat{\varphi}(t,y)\right>_{\R^d}dy = \\
- \int_{D_0} \text{div}(DT_t^{-1}(y)  DT_t^{-\top}(y) \nabla \hat{u} (t,y)) \tilde{\varphi}(t,y)dy.
\end{multline*}
Let us comment on the boundary condition and initial condition. Since $T_0$ is the identity and $D_0$ is the deterministic initial domain, the initial condition stays the same:
 \[u(\omega, x,0)  = u_0(\omega,x) \Leftrightarrow \hat{u}(\omega,x,0) = u_0(\omega,x), \quad \forall x \in D_0,
  \]
for a.e. $ \omega \in \Omega$.  Moreover,  as the boundary of $\partial D_t(\omega)$ is mapped  to  $\partial D_0$, the reformulated boundary condition stays the same:
\begin{eqnarray*}
u(\omega, t,x) &=& 0 \quad \forall (x,t) \in \cup_{t \in (0,\tau)}\partial D_t(\omega)  \times \{t\} \Leftrightarrow \\ 
\hat{u}(\omega, t, y) & =& 0 \quad \forall (y,t) \in \partial D_0 \times (0,\tau) \quad 
\end{eqnarray*}
 \text{ for a.e. } $\omega \in \Omega$. 
Hence,  in the distribution sense, we are led to consider for a.e. $\omega$ 
\begin{eqnarray*}
\hat{u}' - \text{div}(J_t^{-1}A \nabla \hat{u}) + \left< \nabla \hat{u}, A \nabla J_t^{-1} - DT_t^{-1} V \circ T_t \right>_{\R^d} &=& \hat{f} \quad \text{ in } (0,\tau) \times D_0 \\
\hat{u}(\omega,x,t)  &=& 0 \quad \text{ on } \partial D_0 \times (0,\tau) \\
\hat{u}(\omega,x,0) &=& u_0(\omega,x) \quad \text{ on } D_0.
\end{eqnarray*}

Our goal is to show that $\hat{u}$ is a random variable and that it has finite moments, under suitable assumptions on the initial data. Thus, we  formulate a mean-weak formulation for $\hat{u}$. Furthermore, we  prove a more general result, when we have less regularity in the initial data. The regularity results can be obtained from the general theory on parabolic PDEs. In particular, assuming more regularity on $\hat{f}$ and $u_0$, we obtain better regularity of the time derivative of $\hat{u}$.

Observe that since $L^2(\Omega)$ is separable, utilizing tensor product isomorphisms (\ref{tensor}), we conclude
\begin{eqnarray*}
L^2(\Omega) \otimes L^2(0,\tau; H) &\cong& L^2(\Omega, L^2(0,\tau; H)) \cong L^2(\Omega \times (0,\tau); H) \\ &\cong& L^2(0,\tau; L^2(\Omega; H)) \cong L^2(0,\tau) \otimes L^2(\Omega, H)
\end{eqnarray*}
for any Hilbert space $H$. Thus, it holds
\[
L^2(\Omega) \otimes \mathcal{W}(H_0^1(D_0), H^{-1}(D_0)) \cong \mathcal{W}(L^2(\Omega,H_0^1(D_0)),L^2(\Omega,H^{-1}(D_0))),
\]
where $\mathcal{W}(L^2(\Omega,H_0^1(D_0)),L^2(\Omega,H^{-1}(D_0)))$ is a standard Bochner space defined by (\ref{def:stdSBsp}).

\begin{problem}[Mean-weak formulation on $D_0$] \label{prob:RMWF}
Find $\hat{u} \!\in \! \mathcal{W}_0(L^2(\Omega,\!H_0^1(D_0)\!),L^2(\Omega,\!H^{-1}(D_0)\!)\!) $ such that a.e. in $[0,\tau]$ it holds
\begin{eqnarray*} \label{mean-weak form}
&\int_\Omega \int_{D_0} \left< \hat{u}', \varphi \right>_{H^{-1}(D_0), H^1(D_0)} dy d \mathbb{P}+
 \int_\Omega \int_{D_0} \left< D T_t^{-1}(\omega,y) \, D T_t^{-\top}(\omega,y) \nabla \hat{u}, \nabla
\varphi \right>_{\R^d} dy d \mathbb{P}  + \\ 
&\int_\Omega \int_{D_0}  \left<A(\omega,t,y) \nabla J_t^{-1}(\omega,y) - DT_t^{-1}(\omega,y) V (t, T_t(y)), \nabla \hat{u} \right>_{\R^d} \varphi  dy d \mathbb{P} 
= \int_\Omega \int_{D_0}  \hat{f} \varphi dy d\mathbb{P}
 \end{eqnarray*}
for every $\varphi  \in  L^2(\Omega,H_0^1(D_0))$.
\end{problem}

\begin{theorem}\label{T1}
Let Assumptions \ref{ass:V}, \ref{ass:V1}, \ref{UUC} and \ref{UBAss}  hold and $f \in L^2_{L^2(\Omega,H^{-1}(D_0))}$. Then, there is a unique solution $\hat{u} \in  \mathcal{W} (L^2(\Omega, H_0^1(D_0)), L^2(\Omega, H^{-1}(D_0)))$ of Problem \ref{prob:RMWF} and we have the
a priori bound
\begin{equation}
\|\hat{u}\|_{ \mathcal{W} (L^2(\Omega, H_0^1(D_0)), L^2(\Omega, H^{-1}(D_0)))} \leq C \|f\|_ {L^2_{L^2(\Omega,H^{-1}(D_0))} }\label{RDapriori}
\end{equation}
with some deterministic constant $C > 0$.
\end{theorem}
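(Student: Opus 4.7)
The plan is to cast Problem \ref{prob:RMWF} in the abstract Lions' framework for linear parabolic evolution equations on the Gelfand triple $\mathcal{V} \hookrightarrow \mathcal{H} \hookrightarrow \mathcal{V}^*$ defined in (\ref{our_spaces}), and then invoke the standard existence, uniqueness and energy estimate for such equations (see e.g.\ \cite{Wloka}). To this end, I would first rewrite the left-hand side of the equation in Problem \ref{prob:RMWF} as $\langle \hat{u}'(t),\varphi\rangle_{\mathcal{V}^*,\mathcal{V}}+ a(t;\hat{u}(t),\varphi)$, where
\begin{equation*}
a(t;u,\varphi):=\int_\Omega\!\int_{D_0}\!\bigl\langle DT_t^{-1}DT_t^{-\top}\nabla u,\nabla\varphi\bigr\rangle_{\R^d}\,dyd\Pp+\int_\Omega\!\int_{D_0}\!\bigl\langle A\,\nabla J_t^{-1}-DT_t^{-1}V\!\circ\! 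T_t,\nabla u\bigr\rangle_{\R^d}\varphi\,dyd\Pp.
\end{equation*}
The three tasks reduce to (i) measurability of $t\mapsto a(t;u,\varphi)$ for each $u,\varphi\in\mathcal{V}$; (ii) a uniform boundedness estimate $|a(t;u,\varphi)|\le M\|u\|_\mathcal{V}\|\varphi\|_\mathcal{V}$; and (iii) a Gårding inequality $a(t;u,u)+\lambda\|u\|_\mathcal{H}^2\ge\alpha\|u\|_\mathcal{V}^2$ with constants $M,\alpha,\lambda$ independent of $t$ and $\omega$.

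For (i), measurability follows from the continuity in $t$ of all coefficients (the Jacobian, its inverse, $\nabla J_t^{-1}$, $V\circ T_t$) provided by (\ref{T}), (\ref{Jreg}), together with Assumption \ref{ass:RV}. For (ii), the uniform bounds (\ref{UDT}), (\ref{Uass:gradJ}), Assumption \ref{UUC}, and the bound $\|V\circ T_t\|_{L^\infty}\le\|V\|_{L^\infty}$ allow one to control each factor in the integrand by a deterministic constant depending only on $C_T,C_D,C_J$ and $\|V\|_{C([0,\tau],L^\infty)}$. An application of Cauchy–Schwarz on $D_0\times\Omega$ then yields the required boundedness constant $M$. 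For (iii), the main observation is that the principal part is controlled from below by the smallest singular value: since $DT_t^{-1}DT_t^{-\top}=(DT_t^\top DT_t)^{-1}$, its eigenvalues are $\sigma_i(DT_t)^{-2}$, so by (\ref{Ubdd:SVJ}),
\begin{equation*}
\bigl\langle DT_t^{-1}DT_t^{-\top}\nabla u,\nabla u\bigr\rangle_{\R^d}\ge\overline{\sigma}^{-2}\,|\nabla u|_{\R^d}^2\quad\text{a.e.\ in }\Omega\times D_0.
\end{equation*}
The first-order term is absorbed by a standard Young's inequality $|b\cdot\nabla u\,u|\le\tfrac{\varepsilon}{2}|\nabla u|^2+\tfrac{1}{2\varepsilon}|b|^2|u|^2$ with $\varepsilon$ chosen so that $\tfrac{\varepsilon}{2}<\overline{\sigma}^{-2}$; this leaves a positive coercive constant $\alpha=\overline{\sigma}^{-2}-\tfrac{\varepsilon}{2}$ on $|\nabla u|^2$ together with a $\lambda\|u\|_\mathcal{H}^2$ term. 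Poincaré on $D_0$ (its diameter is deterministic) then upgrades the gradient control to the full $\mathcal{V}$-norm.

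With $a(t;\cdot,\cdot)$ verified to be a bounded, measurable, Gårding-coercive bilinear form on $\mathcal{V}$, the Lions–Magenes theorem for linear parabolic equations on a Gelfand triple delivers a unique $\hat u\in\mathcal{W}(\mathcal{V},\mathcal{V}^*)$ satisfying Problem \ref{prob:RMWF} with $\hat u(0)=u_0$, together with the a priori bound in terms of $\|f\|_{L^2_{L^2(\Omega,H^{-1}(D_0))}}$ and $\|u_0\|_\mathcal{H}$. Since the theorem states a bound only in $\|f\|$, I would absorb the $\|u_0\|_\mathcal{H}$-term by noting that it is bounded by assumption (the statement in fact takes $u_0\in L^2(\Omega,H^1_0(D_0))$), or alternatively by subtracting a lifting of $u_0$ and reducing to Problem \ref{prob:RMWF} with zero initial datum in $\mathcal{W}_0$. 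The main technical obstacle is bookkeeping the $t$-dependence to verify that the coercivity and boundedness constants produced by the uniform estimates of Assumption \ref{UBAss} really are independent of $t$ and $\omega$; once this is done, everything else is an application of the classical Galerkin/energy machinery.
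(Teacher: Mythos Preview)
Your proposal is correct and follows essentially the same route as the paper: define the bilinear form $a(t;\cdot,\cdot)$ on $\mathcal V\times\mathcal V$, verify measurability in $t$, uniform boundedness, and a G\r{a}rding inequality using the eigenvalue lower bound $\overline\sigma^{-2}$ for $DT_t^{-1}DT_t^{-\top}$ together with Young's and Poincar\'e's inequalities, then invoke the classical Lions/Wloka result \cite[Theorem~26.1]{Wloka}. Your extra remark about the $\|u_0\|_{\mathcal H}$ term is appropriate; note that Problem~\ref{prob:RMWF} is stated in $\mathcal W_0$, so the zero initial-datum reduction you mention is precisely what the paper tacitly uses.
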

\begin{proof}
For every $t \in [0,\tau]$ we introduce the bilinear form $a(t;\cdot,\cdot): \mathcal{V} \times \mathcal{V} \to \R$ by
\begin{equation}
a(t;\varphi,\psi) := \int_\Omega \int_{D_0} \!\!\left( \left< D T_t^{-1} \, D T_t^{-\top}  \nabla \varphi, \nabla
\psi \right>_{\R^n} \!\!\! +\!\! \left<A\nabla J_t^{-1} - DT_t^{-1} V \circ T_t , \nabla \varphi \right>_{\R^n} \!\! \psi \right) dx d\mathbb{P} . \label{def:a}
\end{equation}
We will prove that $a(t; \varphi, \psi)$ satisfies the following assumptions, which are necessary conditions for the well-posedness of the parabolic PDE stated in \cite[Theorem 26.1]{Wloka}.
\begin{enumerate}
\item[i)] $a(t; \varphi, \psi)$ is measurable on $[0,\tau]$, for fixed $\varphi, \psi \in \mathcal{V}$.
\item[ii)] There exists some $c> 0$, independent of $t$, such that
\begin{equation}\label{ass_one}
|a(t;\varphi, \psi)| \leq c \| \varphi \|_\mathcal{V} \| \psi \|_\mathcal{V} \quad \text{for all } t \in [0,\tau], \varphi, \psi \in \mathcal{V}.
\end{equation}
\item[iii)] There exist real $k_0, \alpha \geq 0$ independent of $t$ and $\varphi$, with
\begin{equation}\label{ass_two}
a(t; \varphi, \varphi) + k_0 \| \varphi \|_\mathcal{H}^2 \geq \alpha \| \varphi \|^2_\mathcal{V} \quad \text{ for all } t \in [0,\tau], \varphi \in \mathcal{V}.
\end{equation}
\end{enumerate}
 
i) Due to Assumptions \ref{ass:RV} and \ref{ass:URV}  and regularity results (\ref{DTreg}) and (\ref{Jreg}),  the integrand in the definition (\ref{def:a}) is $\mathcal{B}([0,\tau])$-measurable. Consequently, according to Fubini's theorem,  we obtain the Borel measurability on $[0,\tau]$ of the mapping
$t \mapsto a(t;\varphi, \psi)$
for fixed $\varphi, \psi \in \mathcal{V}$. 

ii) Applying the Cauchy-Schwartz inequality, we infer
\begin{equation}
\int_\Omega \int_{D_0} \!\!|\left< D T_t^{-1} \, D T_t^{-\top}  \nabla \varphi, \nabla \psi \right>_{\R^n}\!\! | \leq 
\int_\Omega \!\int_{D_0} \!\! \|  D T_t^{-1} \, D T_t^{-\top}  \nabla \varphi \|_{\R^d} \| \nabla \psi  \|_{\R^d} \leq 
C_1 \| \nabla \varphi \|_\mathcal{V} \|\nabla \psi \|_\mathcal{V}, \label{lmax_bound}
\end{equation}
where the last inequality follows from (\ref{Mbound}) and (\ref{UDT}), for $C_1 = \underline{\sigma}^2$. 

According to the triangular inequality we have
\begin{multline*}
 \| A(\omega, t, \cdot) \nabla J_t^{-1}(\omega,\cdot) - DT_t^{-1}(\omega,\cdot) V (t, T_t(\cdot))  \|_\infty  \leq \\
  \| A(\omega, t, \cdot) \nabla J_t^{-1}(\omega,\cdot) \|_\infty +\| DT_t^{-1}(\omega,\cdot) V (t, T_t(\cdot))  \|_\infty.
\end{multline*}
The uniform bound of the second term follows from (\ref{UDT}) and (\ref{UtT}). Concerning the first term, utilizing  Assumption \ref{Uass:gradJ} we get
\[
 \| A(\omega, t, \cdot) \nabla J_t^{-1}(\omega,\cdot) \|_\infty \leq C_{J} \|A\|_\infty.
\]
Moreover, from  (\ref{Ubdd:SVJ}) and (\ref{Ubdd:J}) we conclude
\[
\|A\|_\infty \leq \lambda_{\max}{A} \leq \overline{\sigma}^d \lambda_{\max}({D T_t^{-1} \, D T_t^{-\top}} )\leq \overline{\sigma}^d \underline{\sigma}^2,
\]
which yields to the bound
\begin{multline}
 \| A(\omega, t, \cdot) \nabla J_t^{-1}(\omega,\cdot) - DT_t^{-1}(\omega,\cdot) V (t, T_t(\cdot))  \|_\infty \\
  : =\max_{y \in D_0}   \| A(\omega, t, y) \nabla J_t^{-1}(\omega,y) - DT_t^{-1}(\omega,y) V (t, T_t(y))  \|_{\R^d} \leq C_2,\label{bound_ADVJ}
\end{multline}
for some $C_2>0$ independent of $t$. 

Utilizing the Cauchy-Schwartz inequality and \ref{bound_ADVJ} we infer
\begin{equation}\label{bound_second_term}
\begin{aligned}
&\int_\Omega \int_{D_0} |\left<\nabla \varphi(\omega,t,y), A(\omega, t, y) \nabla J_t^{-1}(\omega,y) - DT_t^{-1}(\omega,y) V (t, T_t(y)) \right>_{\R^d}| | \psi(\omega,t, y)|  \\
&\leq \int_\Omega \int_{D_0}  \| A(\omega, t, y) \nabla J_t^{-1}(\omega,y) - DT_t^{-1}(\omega,y) V (t, T_t(y))  \|_{\R^d} \|\nabla \varphi(\omega,t,y)\|_{\R^d} | \psi(\omega,t, y)|  \\
&\leq C_2 \|  |\nabla \varphi | \|_{H} \| \psi \|_H. 
\end{aligned}
\end{equation}
Finally, inequalities (\ref{lmax_bound}) and (\ref{bound_second_term}), ensure the condition ii).

iii) The bound (\ref{Ubdd:SVJ}) that implies the bound for the eigenvalue 
$\lambda_{\min}(D T_t^{-1} \, D T_t^{-\top}) \geq \frac{1}{\overline{\sigma^2}} =: C_3$. Exploiting this bound and the  Rayleigh quotient of the minimal eigenvalue of the symmetric matrix $ D T_t^{-1} \, D T_t^{-\top} $, we obtain
\begin{eqnarray*}
C_3 \| \nabla \varphi \|_H^2 &\leq& \int_\Omega \int_{D_0} \lambda_{\min} (D T_t^{-1} \, D T_t^{-\top})  \|\nabla \varphi \|^2_{\R^d} \\
&\leq& a(t; \varphi, \varphi) + \int_\Omega \int_{D_0} \| \nabla \varphi \|_{\R^d} \| DT_t^{-1} V \circ T_t - A \nabla J_t^{-1} \|_{\R^d} |\varphi| \\
&\leq&a(t; \varphi, \varphi) + C_2 \| \nabla \varphi \|_{H}\| \varphi \|_H \\
&\leq&a(t; \varphi, \varphi) + C_2  \left(2 \varepsilon \| \nabla \varphi \|^2_H + \frac{1}{2 \varepsilon} \| \varphi \|^2_H \right), 
\end{eqnarray*}
where we used Young's inequality in the last step. For small enough $\varepsilon >0$, we get
\[
(C_3 - 2 \varepsilon) \| \nabla \varphi \|_H^2 \leq a(t; \varphi,\varphi) + k_0 \|\varphi\|^2_H,
\]
for $k_0 := C_2 \frac{1}{2 \varepsilon}$. Applying  Poincare's inequality with the constant $C_P$ from
\[
\frac{C_3- 2\varepsilon }{1+ C^2_P} \| \varphi \|_V^2 \leq (C_3 - 2 \varepsilon) \| \nabla \varphi \|_H^2 \leq a(t; \varphi,\varphi) + k_0 \|\varphi\|^2_H,
\]
we conclude that iii) holds with $\alpha = \frac{C_3- 2\varepsilon }{1+ C^2_P}$.
 
After proving i), ii) and iii), the classical result  \cite[ Theorem 26.1]{Wloka} yields the existence and uniqueness of the solution $\hat{u}$ that satisfies an a priori bound (\ref{RDapriori}).
\end{proof}

\subsection{Path-wise bounded transformation - well-posedness of the transformed equation}\label{sec:lognormal}

In this subsection we want to show that even if we don't assume uniform bounds stated in Assumptions \ref{ass:RV} and \ref{ass:URV}, which is the case for example stated in Subsection \ref{sec:unbbTr}, we can still prove existence and uniqueness of the solution of the transformed equation on the cylindrical domain, i.e. Problem \ref{newprob:RWF3}. However, in this case we can't consider the mean-weak formulation and directly apply the general theory, but instead, we  prove the path-wise well-posedness and derive the path-wise a priori bound. In addition, we prove the measurability of the solution and $p-$bound of its moments $p \in [1, \infty)$.  

For the path-wise setting, we define
\[
\mathbb{V} := H_0^1(D_0) \quad \text{ and } \quad \mathbb{H} := L^2(D_0)
\]
which form the Gelfand triple. We consider a path-wise bilinear form $\mathbbm{a} (\omega, t; \cdot, \cdot):\mathbb{V} \times \mathbb{V} \to \R$ 
\begin{equation}
\mathbbm{a}(\omega, t; \varphi, \psi ) := 
 \int_{D_0} \!\left( \left< D T_t^{-1} \, D T_t^{-\top}  \nabla \varphi, \nabla
\psi \right>_{\R^n}  +\left<A\nabla J_t^{-1} - DT_t^{-1} V \circ T_t , \nabla \varphi \right>_{\R^n} \! \psi \right) dx  \label{def:patha}
\end{equation}
that is analogue to (\ref{def:a}). Assume for simplicity that  the initial data $u_0$ is deterministic. The stochastic case can be handled in an analogue way.

\begin{theorem}\label{T2}
Let Assumptions \ref{ass:V}, \ref{ass:V1}, \ref{MbleConst}  hold and $f \in L^2([0,\tau],L^2(\Omega, H^{-1}(D_0)))$. Then, there is a unique solution $\hat{u} \in  \mathcal{W} ( \mathbb{V}, \mathbb{V}^{-1})$ of Problem \ref{newprob:RWF3} and we have the
a priori bound
\begin{equation}
\|\hat{u}(\omega)\|_{ \mathcal{W} ( \mathbb{V}, \mathbb{V}^{-1})} \leq C(\omega) \|f(\omega)\|_ {L^2([0,\tau], H^{-1}(D_0)) }\label{PathRDapriori}
\end{equation}
for a.e. $\omega$ and where
\begin{equation}\label{COmega}
C(\omega) = 2\Big(C_L(\omega) \mathbb{C}(\omega)+1\Big) + \mathbb{C}(\omega)
\end{equation}
where $\mathbb{C}(\omega) := \max \Big\{ \frac{2(1+C_M)}{\alpha(\omega)}  , \frac{1}{\alpha^2(\omega)} \Big\}$ and $C_L(\omega)$ is a path-wise bound of the operator $L(\omega,t) v:= -\text{div}(J_t^{-1}A\nabla v) + \left< \nabla v, A \nabla J_t^{-1} - DT_t^{-1} V \circ T_t \right>_{\R^d} + k_0 Id$, for $k_0$ from G\r{a}rding's inequality.
Moreover,  for every $\omega$ the solution depends continuously on $(f(\omega),u_0)$ as a map from $L^2([0,\tau],L^2(\Omega, \mathbb{V}^*)) \times \mathbb{H}$ to $\mathcal{W}(\mathbb{V}, \mathbb{V}')$. 
\end{theorem}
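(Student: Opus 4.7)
The plan is to mirror the proof of Theorem \ref{T1} in a path-wise manner. Since Assumption \ref{MbleConst} only guarantees that the pointwise bounds $C_T, C_D, C_t, C_J$ are random variables in every $L^p(\Omega)$ rather than uniform constants, a mean-weak formulation on $\Omega \times D_0$ is not directly amenable to the general theory. Instead, for each fixed $\omega$ outside a null set I would apply the deterministic result \cite[Theorem 26.1]{Wloka} to the path-wise bilinear form $\mathbbm{a}(\omega, t; \cdot, \cdot)$ in (\ref{def:patha}), and then assemble the resulting $\omega$-dependent solutions into a measurable map $\omega \mapsto \hat{u}(\omega)$ equipped with the required moment bounds.

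The first step is to verify, path-wise, the three hypotheses of Wloka's theorem: Borel measurability in $t$ of $\mathbbm{a}(\omega, t; \varphi, \psi)$ for fixed $\varphi, \psi \in \mathbb{V}$, a continuity estimate $|\mathbbm{a}(\omega, t; \varphi, \psi)| \leq c(\omega) \|\varphi\|_{\mathbb{V}} \|\psi\|_{\mathbb{V}}$, and a G\r{a}rding-type inequality $\mathbbm{a}(\omega, t; \varphi, \varphi) + k_0(\omega) \|\varphi\|_{\mathbb{H}}^2 \geq \alpha(\omega) \|\varphi\|_{\mathbb{V}}^2$. Each of these is a verbatim repetition of the corresponding argument in Theorem \ref{T1}, based on the regularity (\ref{DTreg})--(\ref{Jreg}), the singular value bounds (\ref{bdd:SVJ})--(\ref{bdd:J}) and the gradient bound in (\ref{ass:gradJ}); the only difference is that every constant that previously was deterministic is now an $\omega$-dependent quantity built from $C_D(\omega), C_t(\omega), C_J(\omega)$. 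Wloka's theorem then yields, for almost every $\omega$, a unique $\hat{u}(\omega) \in \mathcal{W}(\mathbb{V}, \mathbb{V}^{*})$ satisfying (\ref{new weak form on D}); careful bookkeeping of constants in the resulting energy estimate (combining Gr\"onwall control of the $\mathbb{H}$-norm with an inversion of the operator $L(\omega,t) + k_0 \, Id$ to bound $\partial^\bullet \hat{u}$ in $\mathbb{V}^*$) produces precisely the form (\ref{PathRDapriori})--(\ref{COmega}).

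The delicate step is to promote the pointwise-in-$\omega$ existence into a jointly measurable map $\omega \mapsto \hat{u}(\omega)$ with values in $\mathcal{W}(\mathbb{V}, \mathbb{V}^*)$. Here I would employ a Galerkin approximation against a countable basis of $\mathbb{V}$ that is independent of $\omega$: the $n$-th Galerkin solution $\hat{u}_n(\omega)$ solves a linear ODE system whose coefficient matrices inherit Borel measurability in $\omega$ from the random fields $T, DT, J_t, \Vel$, so classical results on linear ODEs with measurable coefficients give measurability of each $\hat{u}_n(\omega)$. The uniform-in-$n$ path-wise bound, derived from the same G\r{a}rding inequality, then transfers measurability to the weak/strong limits in $\mathcal{W}(\mathbb{V}, \mathbb{V}^*)$, and uniqueness identifies the limit with $\hat{u}(\omega)$.

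Once measurability is in hand, the $L^p(\Omega)$ control of $\hat{u}$ follows directly from (\ref{PathRDapriori}): the constant $C(\omega)$ in (\ref{COmega}) is a polynomial expression in $C_L(\omega)$ and $\alpha(\omega)^{-1}$, both of which depend polynomially on $C_D(\omega), C_J(\omega), C_t(\omega)$. By Assumption \ref{MbleConst} these random variables lie in every $L^p(\Omega)$, so H\"older's inequality combined with the hypothesis $f \in L^2([0,\tau], L^2(\Omega, H^{-1}(D_0)))$ yields the moment bounds and continuous dependence on $(f, u_0)$. I expect the principal obstacle to be the measurability argument paired with the precise tracking of constants required to recover the explicit form (\ref{COmega}); the existence and uniqueness themselves are an essentially direct application of the deterministic parabolic theory applied sample by sample.
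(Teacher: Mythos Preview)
Your proposal is correct and follows essentially the same route as the paper: a path-wise application of \cite[Theorem 26.1]{Wloka} to $\mathbbm{a}(\omega,t;\cdot,\cdot)$, verifying the three hypotheses with $\omega$-dependent constants exactly as in Theorem~\ref{T1}, and then revisiting the Galerkin construction inside Wloka's proof to track the constants and arrive at the explicit form (\ref{COmega}). The paper's proof does precisely this, writing $\hat{u}_m(\omega,t)=\sum_{i=1}^m g_{im}(\omega,t)w_i$ against a fixed basis of $\mathbb{V}$, deriving the energy estimate for $\hat{u}_m$, passing to the limit, and bounding $\partial_t \hat u$ via the operator bound $C_L(\omega)$.

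One small difference worth noting: the statement of Theorem~\ref{T2} is purely path-wise (existence, uniqueness, the bound (\ref{PathRDapriori}), and continuous dependence for each $\omega$); the paper treats measurability of $\omega\mapsto\hat u(\omega)$ and the $L^p(\Omega)$ moment bounds \emph{after} the proof, as a separate paragraph and corollary. You fold these into the proof itself, which is harmless but more than the theorem claims. For measurability, the paper's primary argument is the composition ``$\omega\mapsto (f(\omega),u_0)$ measurable'' with ``$(f,u_0)\mapsto\hat u$ continuous'' (the latter being the continuous-dependence clause of the theorem), while your Galerkin-limit measurability argument is exactly the alternative the paper records in a remark. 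Both are valid; your route is slightly more self-contained, the paper's route exploits the continuous dependence already established and is shorter.
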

\begin{proof}
The proof is a path-wise analogue to the proof of Theorem \ref{T1} and it is based on the application of the general result \cite[ Theorem 26.1]{Wloka}. However, unlike in the uniformly bounded case, the constants that appear depend on the sample $\omega$ and in order to prove $\hat{u} \in \mathcal{W}(\mathcal{V}, \mathcal{V}^* )$, we need to control those constants and show that $C(\omega)$ satisfies (\ref{COmega}).

The condition i) is full field for the same reason as in the uniform case. In the condition ii), we have that the bound in (\ref{ass_one}) satisfies
\begin{equation}\label{condII}
C_1(\omega) := \underline{\sigma}^2(\omega) [1 + C_J(\omega) \overline{\sigma}^d(\omega)] + C_D(\omega)C_t(\omega).
\end{equation}
For the condition iii) we obtained $\alpha = \frac{C_3(\omega) - 2 \epsilon(\omega)}{ 1 + C^2_P}$
where $C_P$ is a deterministic Poincare's constant that depends on the domain $D_0$ and $C_3(\omega) = \overline{\sigma}^{-2}$. We chose $\epsilon(\omega)$ small enough such that $C_3(\omega) - 2 \epsilon(\omega)>0$, since we can not uniformly bound $C_3(\omega)$ from below, we choose for example $\epsilon(\omega) = \frac{C_3(\omega)}{4}$. Hence, we have
\[
\mathbbm{a}(\omega, t; \varphi, \varphi) + k_0(\omega) \| \varphi \|_{\mathbb{H}}^2 \geq \alpha(\omega) \| \varphi \|^2_{\mathbb{V}}
\] 
where 
\begin{equation}\label{kOmega}
\alpha(\omega) = \frac{1}{2 \overline{\sigma}^2(\omega) (1+C_P^2)} \quad \text{ and }  \quad
k_0(\omega) = 2 \overline{\sigma}^2(\omega) [C_D(\omega) C_t(\omega) + C_J(\omega) \overline{\sigma}^d(\omega) \underline{\sigma}^2(\omega)].
\end{equation}
and 
\begin{equation}\label{CComega}
\mathbb{C}(\omega) := \max \Big\{ \frac{2(1+C_M)}{\alpha(\omega)}  , \frac{1}{\alpha^2(\omega)} \Big\}.
\end{equation}
As a consequence of  \cite[ Theorem 26.1]{Wloka}, for every fixed $\omega$ there exists a unique solution $\hat{u}(\omega) \in \mathcal{W}(\mathbb{V}, \mathbb{V}^*)$. To determine the constant $C(\omega)$ in (\ref{COmega}), we revisit the proof of the general theorem, in a path-wise sense. The general idea is to approximate the solution $\hat{u}(\omega)$ by a sequence $\hat{u}_m(\omega)$, for every $\omega$. Then show the bound of $\|\hat{u}(\omega)\|_{L^2((0,\tau), \mathbb{V})}$ which implies the weak convergence of $\hat{u}_m(\omega)$ to some $z(\omega)$, which also solves the starting equation and hence, because of the uniqueness of the solution, we have $z(\omega) = \hat{u}(\omega)$, for every $\omega$. In the end, one proves that $\hat{u}_m(\omega)$ strongly converges to $\hat{u}(\omega)$ in $L^2((0,\tau), \mathbb{V})$. 

Observe that we can set $k_0(\omega)$ to be zero, by taking $L_1(\omega,t) \equiv L(\omega,t) + k_0(\omega) Id$. We will continue writing $L(\omega, t)$ and exploit that from the bound (\ref{condII}) and (\ref{kOmega}), we have that 
\begin{equation}\label{Lbound}
\|L(\omega)\| \leq C_1(\omega) + k_0(\omega) =: C_L(\omega).
\end{equation}
Let $\omega \in \Omega$ be arbitrary but fixed, and define 
\[
\hat{u}_m(\omega, t) := \sum_{i=1}^m g_{im}(\omega, t) w_i
\]
where $\{w_i\}_i$ is an ONB of $\mathbb{V}$ and $g_{im} \in L^2(\Omega, \mathbb{V})$ are such that $\hat{u}_m(\omega)$ is a path-wise solution of the ODE
\begin{align*}
\frac{d}{dt} \hat{u}_m(\omega)(\omega,t) + L(\omega, t) \hat{u}_m(\omega,t) &= f(\omega, t) \\
\hat{u}_m(\omega,0) &= u_{0m}
\end{align*}
where $u_{0m} \to u_0$ in $\mathbb{H}$, as $m \to \infty$ and hence $\| u_{0m}\|_{\mathbb{H}} \leq \|u_0\|_{\mathbb{H}} + M$, for some deterministic $M>0$.
Such  $\hat{u}_m(\omega)$ is uniquely determined and it holds
\begin{align*}
\|\hat{u}_m(\omega,\tau)\|_{\mathbb{H}}^2 \, + \, &2 \alpha(\omega) \int_0^\tau \| \hat{u}_m(\omega,t) \|^2_{\mathbb{V}} dt \leq \\
&\|\hat{u}_{0m}\|^2_{\mathbb{H}} + \alpha(\omega) \int_0^\tau \| \hat{u}_m(\omega,t) \|^2_{\mathbb{V}} dt + \frac{1}{\alpha(\omega)} \int_0^\tau \| f(\omega,t) \|_{\mathbb{V}^*}dt.
\end{align*}
Hence, we have
\[
 \int_0^\tau \| \hat{u}_m(\omega,t) \|^2_{\mathbb{V}} dt \leq \max \Big\{ \frac{2(1+C_M)}{\alpha(\omega)}  , \frac{1}{\alpha^2(\omega)} \Big\} \left(\| u(0) \|^2_\mathbb{H}+ \int_0^\tau \| f(\omega,t)\|_{\mathbb{V}^*} dt \right)
\]
where $C_M$ is a deterministic constant such that $M^2 < C_M \|u_0\|^2_{\mathbb{H}}$. 
Since $\hat{u}_m(\omega,t) \to z(\omega)$ in $L^2((0,\tau), \mathbb{V})$, it follows
\begin{equation}\label{zestI}
\int_0^\tau \|z(\omega,t)\|_{\mathbb{V}}^2 \, dt \leq \mathbb{C}(\omega) \left( \|u_0\|^2_{\mathbb{H}} + \int_0^\tau  \| f(\omega,t)\|_{\mathbb{V}^*} dt \right).
\end{equation}
where $ \mathbb{C}(\omega)$ is defined by (\ref{CComega}).  Uniqueness of the solution  implies that$z(\omega)$ equals $\hat{u}(\omega)$, for every $\omega$.
Next step is to bound its time derivative. Utilizing $\frac{d}{dt} \hat{u}(\omega, t) = L(\omega, t) \hat{u}(\omega, t) + f(\omega,t)$ and estimates (\ref{Lbound}) and  (\ref{condII}), we obtain
\begin{equation}\label{zestII}
\int_0^\tau \left\| \frac{d\hat{u}(\omega, t)}{dt} \right\|^2_{\mathbb{V}^*} dt \leq 2\left(C_L(\omega) \mathbb{C}_2(\omega)+1 \right) \int_0^\tau \left(\| f(\omega, \tau) \|^2_{\mathbb{V}^*}dt + \|u_0\|^2_{\mathbb{H}} \right)
\end{equation}
where $k_0(\omega)$ is defined by (\ref{kOmega}). Combining (\ref{zestI}) and (\ref{zestII}), we get the final estimate for every $\omega$
\[
\| \hat{u}(\omega)  \|_{\mathcal{W}(\mathbb{V}, \mathbb{V}^*)} \leq \left(2(C_L(\omega) \mathbb{C}(\omega)+1) + \mathbb{C}(\omega) \right) (\int_0^\tau \| f(\omega, t) \|^2_{\mathbb{V}^*} dt + \|u_0\|^2_{\mathbb{H}})  
\]
which shows (\ref{COmega}).
\end{proof}

One way to show measurability of the map $\omega \mapsto \hat{u}(\omega)$ is to exploit that $\omega \mapsto (f(\omega), u_0)$ is measurable from $L^2(\Omega)$ to $L^2((0,\tau), \mathbb{V}^*) \times \mathbb{H}$. Moreover, from the previous theorem we have that the map $(f(\omega), u_0) \mapsto \hat{u}(\omega) $ is continuous for every $\omega$. Since, continuous function composed with measurable function is measurable w.r.t. Borel $\sigma-$algebra, the measurability of the solution $\hat{u}$ follows.

\begin{remark} Another approach to prove measurability of the solution $\hat{u}$ is to exploit that $\hat{u}$ is a limit of the $\hat{u}_m$ that is a solution of an ODE. One could show that $\omega \mapsto L(\omega, t)$ is measurable and $(u_0, L) \mapsto \hat{u}_m$ is continuous, which implies the measurability of $\hat{u}_m$, and then the limit is as well measurable. 
\end{remark}

Let for now $f$ be deterministic or  uniformly bounded in $\omega$, then from (\ref{RDapriori}) it follows
\[
\mathbb{E}[\|\hat{u}(\omega)\|_{\mathcal{W}(\mathbb{V}, \mathbb{V}^*)}] \leq \mathbb{E}[C(\omega)] (\int_0^\tau \| f(\omega, t) \|^2_{\mathbb{V}^*} dt + \|u_0\|^2_{\mathbb{H}}  )
\] 
Utilizing Cauchy-Schwartz inequality, we determine assumption on the path-wise constants that will ensure that $\mathbb{E}[C(\omega)]$ is finite. Namely, we need $\mathbb{C} \in L^2(\Omega)$ and $C_L(\omega) \in L^2(\Omega)$, where $C_L(\omega)$ is defined by (\ref{Lbound}).

\begin{ass}\label{ass:constMoments}
Assume $\overline{\sigma}^4 \in L^2(\Omega)$ and $C_L\in L^2(\Omega)$.
\end{ass}

\begin{cor}
Assuming in addition to  conditions from Theorem \ref{T2}, that  $f \in L^2((0,\tau), \mathbb{V}^*)$ and Assumption \ref{ass:constMoments}, it holds $\hat{u} \in \mathcal{W}(L^2(\Omega, \mathbb{V}), L^2(\Omega, \mathbb{V}^*))$.
\end{cor}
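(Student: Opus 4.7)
The plan is to combine the path-wise well-posedness of Theorem \ref{T2} with the tensor product isomorphism (\ref{tensor}) to lift the existence statement from a path-wise regime to the joint Bochner space. Since we have already identified $L^2(\Omega)\otimes \mathcal{W}(\mathbb{V},\mathbb{V}^*) \cong \mathcal{W}(L^2(\Omega,\mathbb{V}),L^2(\Omega,\mathbb{V}^*))$, it suffices to prove (a) measurability of $\omega\mapsto \hat u(\omega)$ as a $\mathcal{W}(\mathbb{V},\mathbb{V}^*)$-valued random variable, and (b) the integrability $\mathbb{E}\bigl[\|\hat u(\omega)\|^{2}_{\mathcal{W}(\mathbb{V},\mathbb{V}^*)}\bigr] < \infty$.

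For (a), I would invoke the continuity statement at the end of Theorem \ref{T2}: the solution map $(f(\omega),u_0)\mapsto \hat u(\omega)$ is continuous from $L^2((0,\tau),\mathbb{V}^*)\times \mathbb{H}$ into $\mathcal{W}(\mathbb{V},\mathbb{V}^*)$ for every fixed $\omega$, and the coefficients of the bilinear form $\mathbbm{a}(\omega,t;\cdot,\cdot)$ are measurable in $\omega$ by the construction in Section \ref{sec:randomTubes}. Composing a continuous map with a measurable one yields Borel measurability of $\hat u$, exactly as sketched in the remark following Theorem \ref{T2}.

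For (b), I would square the path-wise bound (\ref{PathRDapriori}): since $f$ is deterministic,
\begin{equation*}
\|\hat u(\omega)\|_{\mathcal{W}(\mathbb{V},\mathbb{V}^*)}^{2} \leq C(\omega)^{2}\,\bigl(\|f\|^{2}_{L^2((0,\tau),\mathbb{V}^*)} + \|u_0\|^{2}_{\mathbb{H}}\bigr)^{2},
\end{equation*}
so the task reduces to bounding $\mathbb{E}[C(\omega)^{2}]$. Expanding $C(\omega)=2(C_L(\omega)\mathbb{C}(\omega)+1)+\mathbb{C}(\omega)$ and applying the triangle inequality in $L^2(\Omega)$, it is enough to control $\mathbb{E}[\mathbb{C}(\omega)^{2}]$ and $\mathbb{E}[C_L(\omega)^{2}\mathbb{C}(\omega)^{2}]$. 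For the first, use $\mathbb{C}(\omega)\leq c_1 \overline{\sigma}^{2}(\omega) + c_2 \overline{\sigma}^{4}(\omega)$ (from the formulae (\ref{kOmega}) and (\ref{CComega})), so that $\overline{\sigma}^{4}\in L^2(\Omega)$ immediately yields $\mathbb{C}\in L^2(\Omega)$. For the mixed term, apply the Cauchy--Schwartz inequality in $\Omega$ to decouple $C_L$ and $\mathbb{C}$, invoking Assumption \ref{ass:constMoments}.

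The only real obstacle is arithmetic bookkeeping between the integrability exponents: the isomorphism requires second moments in $\omega$, whereas the author's preceding discussion (of $\mathbb{E}[C(\omega)]$) only delivers first moments via Cauchy--Schwartz. In the plan above I treat this by working with the squared path-wise estimate from the outset and pairing it with Assumption \ref{ass:constMoments}, which is phrased in terms of $L^2(\Omega)$ norms of the dominant random bounds. Everything else is a direct consequence of Theorem \ref{T2}, the tensor identification (\ref{tensor}), and Fubini applied to transfer the path-wise bound to the Bochner norm.
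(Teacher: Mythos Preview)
Your outline follows the paper's own argument essentially verbatim: measurability of $\hat u$ by composing the continuous solution map with the measurable data, and integrability by taking expectations of the path-wise bound (\ref{PathRDapriori}) and decoupling the product $C_L(\omega)\mathbb{C}(\omega)$ via Cauchy--Schwartz. The paper does not provide a separate proof of the corollary; it simply states it after the paragraph bounding $\mathbb{E}[C(\omega)]$, so the match is exact in structure.

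You go slightly beyond the paper by flagging the exponent discrepancy: the tensor identification $\mathcal{W}(L^2(\Omega,\mathbb{V}),L^2(\Omega,\mathbb{V}^*))\cong L^2(\Omega)\otimes\mathcal{W}(\mathbb{V},\mathbb{V}^*)$ indeed requires the \emph{second} moment of $\|\hat u(\omega)\|_{\mathcal{W}(\mathbb{V},\mathbb{V}^*)}$, whereas the paper's displayed estimate only controls the first. Your remedy---square (\ref{PathRDapriori}) before integrating---is the right move, but it does not close under Assumption~\ref{ass:constMoments} as literally stated: Cauchy--Schwartz applied to $\mathbb{E}[C_L^2\mathbb{C}^2]$ needs $C_L,\mathbb{C}\in L^4(\Omega)$, not merely $L^2(\Omega)$. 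This is a genuine, if minor, gap in both arguments. It is harmless in practice because Theorem~\ref{T2} already assumes Assumption~\ref{MbleConst}, which places all the primitive constants $C_D,C_t,C_J$ in $L^p(\Omega)$ for every finite $p$; since $C_L$ and $\mathbb{C}$ are polynomial in these quantities, the required $L^4$ (or any $L^p$) bounds follow. You should either invoke Assumption~\ref{MbleConst} directly at this step or note that Assumption~\ref{ass:constMoments} must be read with the exponent upgraded accordingly.
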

Note that we can also assume that $f \in L^2((0,\tau), L^2(\Omega,\mathbb{V}^*))$, without assuming uniform bound, and then apply Cauchy-Schwartz inequality. In this case we need more regularity assumptions than in Assumption \ref{ass:constMoments}. Moreover, analogue $p-$moment bounds, $p \in [1, \infty)$ can be obtain, utilizing H\"older's inequality and  modifying the regularity assumption.

\section{Parabolic Stokes equation}
As already announced in the introduction, linear parabolic Stokes equation is one of the examples where it is not enough to consider just plain pull-back transformation of the function. The issue is that  we would like to preserve the divergence free property, and this is not the case if we use the plain pull-back transformation. Instead, motivated by the work presented in \cite{CSZ, WI, Saal} we consider the Piola type transformation of the function. Note that this transformation will keep the divergence free property just in the case of the volume preserving transformation. We consider the following parabolic Stokes equation on a random non-cylindrical domain
\begin{equation}\label{eq:MovingStokesRandom}
\begin{split}
	\partial_t u - \Delta u + \nabla p &= f
	\\
	\text{div} u &= 0
	\\
	u &=0
	\\
	u|_{t=0} &= u_0
\end{split}
\begin{split}
	~&\mbox{in}~ Q_T(\omega),
	\\
	~&\mbox{in}~ Q_T(\omega),
	\\
	~&\mbox{in}~ \partial Q_T(\omega),
	\\
	~&\mbox{in}~ D_0,
\end{split}
\end{equation}
with velocity field $u$ and pressure $p$. Without loss of generality, we consider the system (\ref{eq:MovingStokesRandom}) with
zero boundary conditions. In this setting we make additional assumptions about the transformation. As outlined in \cite[Assum. 1.1]{Saal}, we  assume that the transformation $T(\omega, t, \cdot)$ is a $C^3-$diffeomorphism and it is volume preserving  $\nabla_x T(\omega, t, x) \equiv 1$. We define the transformation $\mathcal{F}$ of the function $u$ by
\[
\hat{u}(\omega, t, \hat{x}) := DT(\omega, t, x) u(\omega, t, T(\omega, t,x)).
\]
From \cite[Prop 2.4]{WI}, it follows that $\text{div} \hat{u} = \text{div} u$ on $Q_T(\omega)$ for any $\omega$. Uilizing  the previous transformation, we obtain the following transformed equation on $Q_0$
	\begin{align*}
	\int_{\Omega}\int_{D_0} \det(D\phi)(
	\hat{u}_t
	+ D\phi^{-1} D\phi_t \hat{u}
	-D\phi^{-1} D\phi^{-T} D(D\phi \hat{u}) \phi_t
	&+ D\phi^{-1}D\phi^{-T} \nabla \hat{p} ) \cdot v\\
	-  D\phi^{-1} \textrm{div} \left( \det(D\phi) D\phi^{-1}D\phi^{-T} D (D\phi \hat{u})\right)\cdot v
	&= \int_{\Omega}\int_{D_0} D\phi^{-1}\hat{f}\cdot v \det(D\phi),
	\end{align*}
for divergence free test functions $v$.
 Observe that in \cite{WI, Saal} authors consider different approach, namely  the Helmholtz projection and Cauchy problem formulation. 
Note that for the volume preserving transformations, the pressure term disappears in the weak form for the divergence free test functions.
The example of these  transformation is rotation about the $z-$axis by a random angle or any arbitrary affine-orthogonal transformation. In this case, the well-posedness of the equation in the random setting can be showed by the standard PDEs techniques. Notice that in \cite{WI}, the local existence is showed for any volume preserving transformation. However, in our setting the difficulty is that we consider this problem path-wise, hence the local time will depend on a sample. For this reason, the general case of a volume preserving transformation is out of the scope of this work and is a topic of a future research.

\section*{Acknowledgments}

The author would like to express her gratitude to Christof Schwab and Carsten Gr\"{a}ser for many helpful suggestions and insightful discussions during the preparation of this paper. Furthermore, I would like to express my gratitude for the  support and help to Nikolas Tapia, concerning the example of not uniformly bounded random evolving field and Philip Harbert, concerning the linear Stokes equation.

\bibliographystyle{plain}
\bibliography{references}

\end{document}